\begin{document}

\newtheorem{theorem}{Theorem}[subsection]
\newtheorem{lemma}[theorem]{Lemma}
\newtheorem{corollary}[theorem]{Corollary}
\newtheorem{conjecture}[theorem]{Conjecture}
\newtheorem{proposition}[theorem]{Proposition}
\newtheorem{question}[theorem]{Question}
\newtheorem{problem}[theorem]{Problem}
\newtheorem*{main_thm}{Random Surface Subgroup Theorem~\ref{thm:random_amalgam_surface}}
\newtheorem*{claim}{Claim}
\newtheorem*{criterion}{Criterion}
\theoremstyle{definition}
\newtheorem{definition}[theorem]{Definition}
\newtheorem{construction}[theorem]{Construction}
\newtheorem{notation}[theorem]{Notation}
\newtheorem{convention}[theorem]{Convention}
\newtheorem*{warning}{Warning}

\theoremstyle{remark}
\newtheorem{remark}[theorem]{Remark}
\newtheorem{example}[theorem]{Example}
\newtheorem*{case}{Case}

\def\id{\text{id}}
\def\Id{\text{Id}}
\def\1{{\bf{1}}}
\def\p{{\mathfrak{p}}}
\def\H{\mathbb H}
\def\Z{\mathbb Z}
\def\R{\mathbb R}
\def\C{\mathbb C}
\def\F{\mathbb F}
\def\P{\mathbb P}
\def\Q{\mathbb Q}
\def\E{{\mathcal E}}

\def\tra{\textnormal{tr}}
\def\length{\textnormal{length}}

\newcommand{\marginal}[1]{\marginpar{\tiny #1}}

\title{Random graphs of free groups contain surface subgroups}
\author{Danny Calegari}
\address{Department of Mathematics \\ University of Chicago \\
Chicago, Illinois, 60637}
\email{dannyc@math.uchicago.edu}
\author{Henry Wilton}
\address{Department of Mathematics \\ University College London \\ Gower Street \\ London \\ WC1E 6BT \\ UK}
\email{hwilton@math.ucl.ac.uk}

\date{\today}

\begin{abstract}
A random graph of free groups contains a surface subgroup.
\end{abstract}

\maketitle

\section{Introduction}

Gromov's Surface Subgroup Question asks whether every one-ended hyperbolic group contains
a subgroup isomorphic to the fundamental group of a closed surface with $\chi<0$.

In this paper we show that a {\em random graph of free groups} contains
{\em many} closed surface subgroups, with probability going to $1$ as a certain parameter
in the model of randomness goes to infinity. 

Graphs of free groups are a very important special case for Gromov's question, for 
several reasons. For example, let $G$ be a one-ended hyperbolic group which is 
isomorphic to the fundamental group of a non-positively curved cube complex (informally, 
$G$ is said to be {\em cubulated}).  Agol \cite{Agol_VHC} showed that $G$ is virtually 
special, and therefore $G$ contains a subgroup isomorphic to a one-ended graph of free 
groups (see Theorem \ref{thm: GoGs inside special groups}).  Many classes of hyperbolic 
groups are known to be cubulated, including
\begin{enumerate}
\item{$C'(1/6)$ groups (Wise, \cite{Wise_cancel});}
\item{random groups at density $<1/6$ (Ollivier--Wise, \cite{Ollivier_Wise});}
\end{enumerate}
and others. Such groups are all now known to contain graphs of free groups, so our
main result makes it plausible that they all contain surface subgroups.

\subsection{Precise statement of main theorem}

Let $F_k$ and $F_l$ be free groups of rank $k$ and $l$, and suppose we have chosen a
free generating set for each group. A {\em random homomorphism of length $n$} is a
homomorphism $\phi:F_k \to F_l$ which takes each of the generators of $F_k$ to 
a reduced word of length $n$ in the generators of $F_l$ (and their inverses), independently
and randomly with the uniform distribution. See \S~\ref{subsection:random_homomorphisms} 
for more details. 

If we fix a finite graph, and for each edge and vertex in the graph we fix a free group of
finite (nonzero) rank and a free generating set, we can define a {\em random} graph 
of free groups (with the given edge and vertex groups and the given generators) 
by taking each homomorphism of an edge group to a vertex group to be a random homomorphism
of length $n$. Informally we call the result a {\em random graph of free groups of
length $n$}. Evidently, to show that a random (nontrivial) graph of free groups contains
a surface subgroup, it suffices to prove it in the case of an amalgamated free product
or an HNN extension. With this terminology, our main theorem is the following:

\begin{main_thm}
Let $H:=F_1 *_G F_2$ or $H:=F *_G$ be obtained by amalgamating two free groups over random
subgroups of rank $k\ge 1$ of length $n$, or by taking an HNN extension over two random
subgroups of rank $k\ge 1$ of length $n$. Then $H$ contains a closed surface subgroup
with probability $1-O(e^{-Cn})$.
\end{main_thm}

Note that for any $\lambda>0$ a random graph of free groups of length $n$ 
satisfies the small cancellation condition $C'(\lambda)$ with probability $1-O(e^{-Cn})$
(this echoes an observation made by Button \cite{Button}) and therefore random graphs
of free groups are hyperbolic and virtually special.

The proof of the main theorem is {\em constructive}; that is, given a graph of groups
there is an explicit procedure (guaranteed to work with very high probability) 
to construct a surface subgroup, which is certified as injective by local combinatorial
conditions. A major step in the construction depends on being able to build a
{\em folded fatgraph} with prescribed boundary satisfying certain equidistribution
properties (namely $(T,\epsilon)$-pseudorandomness --- 
see \S~\ref{subsection:random_homomorphisms}); this step is
carried out in Calegari--Walker \cite{Calegari_Walker_LP}, Thm.~8.9. and was used there
to construct injective surface subgroups in certain {\em ascending} HNN extensions of
free groups. The methods from \cite{Calegari_Walker_LP} can easily be adapted to 
certify the existence of surface subgroups in particular graphs of free
groups using the program {\tt scallop} \cite{Walker_scallop}. Experiments suggest that 
such surface subgroups are extremely easy to find. The proof of the main theorem also
produces not one but infinitely many surface subgroups; see 
Remark~\ref{remark:many_subgroups}.

\section{Free groups}

\subsection{Standard rose}

Let $F$ be a finitely generated free group. We fix an identification of $F$ with
$\pi_1(X)$ where $X$ is a {\em rose} --- i.e.\/ a wedge of finitely many circles.
Informally, we say $X$ is a {\em rose for $F$}.
The oriented circles of $X$ determine a (free) generating set for $F$ which we denote
$a,b,c,\cdots$. Inverses are denoted by upper case letters, so $A:=a^{-1}$, $B:=b^{-1}$ and
so on.

\begin{definition}
A graph $Y$ {\em over $X$} is a graph together with a map $f:Y \to X$ taking edges of $Y$
to reduced simplicial paths in $X$. A graph $Y$ over $X$ is {\em folded} if the map $f$
is an immersion (i.e.\/ if it is locally injective).
\end{definition}

If $Y$ is a graph over $X$, each oriented edge of $Y$ is labeled with a reduced word in $F$
in such a way that reversing the orientation gives the inverse label. If $Y$ is folded,
immersed loops in $Y$ are labeled with cyclically reduced words in $F$.

\subsection{Core associated to a subgroup}

Let $G$ be a finitely generated subgroup of $F$, and let $X_G$ be the cover of $X$ associated
to $G$. We think of $X_G$ as a graph with a basepoint.

There is a compact core $Y_G \subset X_G$, defined to be the minimal subgraph of $X_G$
containing the basepoint so that the inclusion $Y_G \to X_G$ is a homotopy equivalence.
We think of $Y_G$ as a graph with a basepoint.

Stallings showed how to obtain $Y_G$ algorithmically by starting with a rose whose edges
are labeled by reduced words in $F$ (the generators of $G$) and then folding the rose
until it immerses in $X$ \cite{stallings_topology_1983}.

\subsection{Core associated to a conjugacy class of subgroup}

Let $Z_G \subset Y_G$ be the minimal subgraph of $Y_G$ so that the inclusion $Z_G \to Y_G$
is a homotopy equivalence. Informally, $Y_G$ is obtained from $Z_G$ by connecting it
to the basepoint  (in $X_G$). We think of $Z_G$ as a graph without a basepoint.

The graph $Z_G$ depends only on the conjugacy class of $G$ in $F$.

\section{Amalgams and HNN extensions}

In this section, we recall the standard construction of an Eilenberg--Mac~Lane space a 
for graph of free groups, and give a criterion for a map from a surface to be $\pi_1$-injective.

\subsection{Mapping cylinder}

Let $F$ be a free group, and let $G$ be a finitely generated subgroup. If $X$
is a rose for $F$, and $Z_G$ is the core graph associated to the conjugacy class of $G$,
there is an immersion $f:Z_G \to X$ and we can build the mapping cylinder
$$C_f:= Z_G \times [0,1] \cup X / (z,1) \sim f(z)$$

More generally, let $G_1,\ldots,G_n$ be a finite collection of finitely generated subgroups 
of $F$ and, for each $i$, let $f_i:Z_{G_i}\to X$ be the corresponding immersion  of core graphs.  
Then we may consider the coproduct immersion
\[
f=\coprod_i f_i:\coprod_iZ_{G_i}\to X
\]
and build the mapping cylinder $C_f$ in the same manner.

\subsection{Amalgams}

Let $F_1$ and $F_2$ be free groups with roses $X_1$ and $X_2$, and let $G$ be a finitely
generated free group with inclusions $\phi_i:G \to F_i$ so that we can form the amalgamated free
product $$H:=F_1 *_G F_2$$
There are core graphs $Z_{G,1}$ and $Z_{G,2}$ associated to $G$, and immersions
$f_i:Z_{G,i} \to X_i$ for $i=1,2$ giving rise to mapping cylinders $C_{f_1}$ and
$C_{f_2}$. The map $\phi=\phi_2\circ\phi_1^{-1}$ gives rise to a canonical homotopy class of 
homotopy equivalence $Z_{G,1} \to Z_{G,2}$.

Let $W_\phi$ be the space obtained from the mapping cylinders $C_{G,i}$ by gluing $Z_{G,1}$
to $Z_{G,2}$ by a homotopy equivalence representing $\phi$. Then $\pi_1(W_\phi)=H$, and $W_\phi$ contains
subgraphs $Z$, $X_1$, $X_2$ with fundamental groups corresponding to the subgroups 
$G$, $F_1$, $F_2$.

\subsection{HNN extensions}

Let $F$ be a free group with rose $X$, and let $G$ be a finitely generated free group with 
two inclusions $\phi_1,\phi_2:G\to F$.  Let $\phi=\phi_2\circ\phi_1^{-1}$.  We can form 
the HNN extension
\[
H:=F*_{\phi}
\]
similarly.  Again, there are core graphs  $Z_{G,1}$ and $Z_{G,2}$, where $Z_{G,i}$ is the 
core of the covering space of $X$ associated to the conjugacy class of $\phi_i(G)$.  These 
are equipped with immersions $f_i:Z_{G,i}\to X$ and the coproduct $f=f_1\sqcup f_2$ defines 
a mapping cylinder $C_f$.

In this case, we let $W_\phi$ be the space obtained from $C_f$ by identifying the two copies 
of $Z_{G,i}$ by a homotopy equivalence representing $\phi$.  As before, $\pi_1(W_\phi)=H$, 
and $W_\phi$ contains subgraphs $Z$ and $X$ with fundamental groups corresponding to the 
subgroups $G$ and $F$, respectively.

\begin{remark}
Of course, one can similarly construct Eilenberg--Mac~Lane spaces for any graph of free groups.  
However, as the fundamental group of a graph of free groups contains an amalgamated product or 
HNN extension as a subgroup, we can restrict ourselves to these cases without loss of generality.
\end{remark}

\subsection{Maps of surfaces}

In this section we state a criterion for a continuous map from a surface into a graph of spaces 
to be injective.

Let $W_\phi$ be one of the spaces constructed above and let $\sigma:S\to W_\phi$ be a 
continuous map from a closed, oriented surface to $W_\phi$, transverse to $Z$.  
Let $\alpha = \sigma^{-1}(Z) \subset S$.  Cutting along $\alpha$ decomposes $S$ into a finite set 
of compact subsurfaces with boundary $S_1,\ldots, S_n$.  Each comes equipped with a map of pairs
\[
\sigma_i: (S_i,\partial S_i)\to (C_f,Z)
\]
where $f:Z\to X$ is an immersion of graphs and $C_f$ is the corresponding mapping cylinder.  
Furthermore, there is an orientation-reversing involution of the disjoint union of the boundaries 
of the $S_i$, coming from how they were glued up in $S$ along $\alpha$.

\begin{definition}
Consider a map of pairs
\[
\sigma_i:(S_i,\partial S_i)\to (C_f,Z)
\]
where $S$ is a compact surface with boundary.  A {\em compressing bigon} for $\sigma_i$ is the 
continuous image of a bigon $B\to C_f$ with $\partial B$ equal to the union of two arcs 
$\alpha\cup\beta$ such that:
\begin{enumerate}
\item there is a proper essential embedding of $\alpha$  in $S_i$ so that the restriction of 
$B\to C_f$ to $\alpha$ equals $\sigma_i$; and
\item $\beta$ is mapped into $Z$.
\end{enumerate}
\end{definition}

\begin{lemma}\label{lemma:compressing_bigon_exists}
If $\sigma$ is not $\pi_1$-injective then some $\sigma_i$ admits a compressing bigon.
\end{lemma}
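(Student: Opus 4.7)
The plan is to argue the contrapositive by an outermost-arc argument. Suppose $\sigma$ is not $\pi_1$-injective; we build a compressing bigon for some $\sigma_i$.

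Choose a nontrivial $g\in\ker\sigma_*$ and represent it by an immersed essential loop $\ell\subset S$ which we take to be transverse to $\alpha=\sigma^{-1}(Z)$ with $|\ell\cap\alpha|$ minimal in the free homotopy class. A standard bigon-reduction argument then guarantees that each subarc of $\ell$ cut off by $\alpha$ is a proper essential arc in the corresponding $S_i$: otherwise one could isotope $\ell$ across a boundary-parallel disk in $S_i$ to reduce intersections. In the corner case $\ell\cap\alpha=\emptyset$, the loop $\ell$ lies entirely in some $S_i$ and $\sigma_i(\ell)$ is null-homotopic in $C_{f_j}$; a separate Dehn-lemma-type argument internal to $S_i$ (or a homotopy of $\ell$ pushing it to cross $\partial S_i$) either produces a compressing bigon directly or reduces to the main case.

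In the main case, since $\sigma_*(g)=1$ the loop $\sigma\circ\ell$ bounds a singular disk $D:D^2\to W_\phi$, which we perturb to be transverse to $Z$ so that $D^{-1}(Z)\subset D^2$ is a compact $1$-manifold with boundary exactly the finite set $\ell^{-1}(\alpha)\subset\partial D^2$. Choose $D$ so as to minimize the number of components of $D^{-1}(Z)$. No component can be a circle: an innermost such circle bounds a subdisk $D_0$ with $D(D_0)\subset C_{f_i}$ and $D(\partial D_0)\subset Z$, and since $f_i:Z_{G,i}\to X_i$ is an immersion of graphs, hence $\pi_1$-injective, the boundary loop is already null-homotopic inside $Z$; replacing $D|_{D_0}$ by a disk into $Z$ and pushing slightly off eliminates the circle, contradicting minimality.

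Hence $D^{-1}(Z)$ consists entirely of arcs. An outermost such arc $a$ cobounds with a subarc $\beta$ of $\partial D^2$ a subdisk $D_0\subset D^2$ containing no other component of $D^{-1}(Z)$, and $\beta$ itself contains no other points of $\ell^{-1}(\alpha)$. Consequently $\beta$ corresponds to a subarc of $\ell$ between two consecutive intersections with $\alpha$, which is a proper essential arc in some $S_i$ by the minimization arranged above. The map $D|_{D_0}:D_0\to C_{f_i}$ restricts to $\sigma_i$ on $\beta$ and sends $a$ into $Z$, exhibiting the required compressing bigon for $\sigma_i$. The main obstacle is the circle-surgery step, whose key input is the $\pi_1$-injectivity of $Z\hookrightarrow C_{f_i}$ afforded by the folded immersion $f_i$; once circles are removed, the outermost-arc step is routine.
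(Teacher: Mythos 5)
Your proposal follows essentially the same route as the paper's proof: pick an essential loop killed by $\sigma$, minimize its intersection with $\alpha$ (so each arc is essential and proper in its piece), fill by a singular disk transverse to $Z$, remove circles of intersection, and extract an outermost bigon. You supply two details the paper leaves tacit: (i) the circle-removal step, justified by the $\pi_1$-injectivity of $Z\hookrightarrow C_{f}$ coming from the folded immersion, and (ii) the corner case $\ell\cap\alpha=\emptyset$, which the paper's write-up does not explicitly address and which you flag but handle only sketchily; since the paper itself glosses over this case, this is not a gap relative to the paper's standard of rigor, and the rest of your argument is sound.
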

\begin{proof}
Let $\gamma$ be an immersed essential loop in $S$ mapping to an inessential loop in $W_\phi$. 
Suppose that $\gamma$ is chosen transverse to $\alpha$, and intersecting it in the least 
number of components (so that every arc of intersection is essential in some $S_i$).  
Let $D$ be a disk with $\partial D=\gamma$, and let $\sigma:D \to W_\phi$ extend $\sigma(\gamma)$.   
Make $D$ transverse to $Z$, and remove loops of intersection by a homotopy. 

An outermost bigon in $D-\sigma^{-1}(Z)$ has one arc in $\gamma$ and the other mapping to $Z$.
Since every arc of $\gamma$ is essential and proper in $S_i$, it is a compressing bigon.
\end{proof}

\begin{lemma}\label{lemma:bigon_gives_loop_in_G}
Suppose $S_i$ admits a compressing bigon. Then there is an essential non-boundary parallel
loop in $S_i$ whose image under $\sigma_i$ is freely homotopic into $Z$.
\end{lemma}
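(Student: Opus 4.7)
The plan is to use the proper essential arc $\alpha$ supplied by the compressing bigon $B$ as the core of the desired loop. Write $\partial B=\alpha\cup\beta$ with $\beta\subset Z$ and $\alpha$ properly, essentially embedded in $S_i$ with endpoints $p_1,p_2\in\partial S_i$; the bigon says $\sigma_i(\alpha)$ is homotopic rel endpoints to $\beta$ in $C_f$. Since we also have $\sigma_i(\partial S_i)\subset Z$, any loop $\gamma$ in $S_i$ obtained by concatenating $\alpha$ with arcs of $\partial S_i$ running from $p_2$ back to $p_1$ automatically has $\sigma_i(\gamma)$ freely homotopic to a loop lying entirely in $Z$, obtained by substituting $\beta$ for $\sigma_i(\alpha)$. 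The problem therefore reduces to finding such a $\gamma$ that is essential and not boundary-parallel in $S_i$.

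I would split by the position of the endpoints. If $p_1,p_2$ lie on a common boundary component $\partial_0$, let $\delta,\delta'$ be the two arcs of $\partial_0\setminus\{p_1,p_2\}$ and consider the candidate loops $\gamma=\alpha\cdot\delta$ and $\gamma'=\alpha\cdot\delta'$. If $p_1,p_2$ lie on distinct components $\partial_1,\partial_2$, take $\gamma$ to be the third boundary of a regular neighborhood of $\alpha\cup\partial_1\cup\partial_2$ (a pair of pants). In either case, essentialness of $\gamma$ in $S_i$ is automatic from essentialness of $\alpha$ as a proper arc: if $\gamma$ were nullhomotopic in $S_i$ then $\alpha$ would be isotopic rel endpoints into $\partial S_i$.

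The main obstacle is ruling out that $\gamma$ is boundary-parallel. In the same-component construction, $\gamma\cdot(\gamma')^{-1}$ is freely homotopic to $\partial_0^{\pm 1}$ in $S_i$, so $\gamma$ and $\gamma'$ cannot both be powers of a single boundary curve; a short case analysis on the complexity of $S_i$ then forces at least one of them to be non-boundary-parallel. In the distinct-component construction, $\gamma$ is boundary-parallel precisely when $S_i$ coincides with the regular neighborhood, i.e., when $S_i$ is itself a pair of pants. The residual low-complexity case is handled directly: in a pair of pants the loop $\partial_1^2\partial_2$ is essential and not boundary-parallel, and its image $\sigma_i(\partial_1)^2\sigma_i(\partial_2)$ lies in $Z$ because every boundary curve does, so the required loop exists unconditionally there. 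Putting the cases together yields the desired $\gamma$.
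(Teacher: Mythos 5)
Your overall strategy is the same as the paper's: use the essential arc $\alpha$ of the compressing bigon, concatenated with boundary arcs, to produce loops whose $\sigma_i$-image is homotopic into $Z$ (because $\sigma_i(\alpha)$ is homotopic rel endpoints to $\beta\subset Z$ and $\sigma_i(\partial S_i)\subset Z$), and then argue that such a loop can be chosen essential and non-boundary-parallel. Where you diverge is in how you guarantee non-boundary-parallelism: the paper universally applies a ``pump up by boundary powers'' trick, taking $\delta^n\delta'\alpha$ (resp.\ $\delta^n\alpha(\delta')^m\alpha^{-1}$) and letting $n$ (resp.\ $n,m$) be large, whereas you work with a single fixed pair $\gamma=\alpha\delta$, $\gamma'=\alpha\delta'$ and try to show one of them already works.

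This is where there is a genuine gap. In the same-component case you observe that $\gamma(\gamma')^{-1}$ is freely homotopic to $\partial_0^{\pm1}$ and conclude that $\gamma,\gamma'$ ``cannot both be powers of a single boundary curve,'' then assert that a ``short case analysis on the complexity of $S_i$ forces at least one of them to be non-boundary-parallel.'' But the first observation only rules out $\gamma$ and $\gamma'$ both being parallel to the \emph{same} boundary component; it does not rule out their being parallel to \emph{different} ones, and that actually happens. Take $S_i$ a pair of pants with boundary $\partial_0,\partial_1,\partial_2$ and $\alpha$ a proper essential arc with both endpoints on $\partial_0$: then $\alpha$ separates $\partial_1$ from $\partial_2$, and $\gamma\simeq\partial_1$, $\gamma'\simeq\partial_2$, so \emph{both} are boundary-parallel. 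Your stated conclusion is false for this surface, and your explicit pair-of-pants fallback (the loop $\partial_1^2\partial_2$) is only invoked for the distinct-components case, not here. Note also that even in that fallback the loop ``$\partial_1^2\partial_2$'' needs a connecting arc whose image is controlled; you must use $\alpha$ (or a concatenation of $\alpha$ with boundary arcs) as the connector, as otherwise the image is not visibly homotopic into $Z$. Both defects are repairable --- e.g.\ by considering $\gamma^2\gamma'$ with basepoint at an endpoint of $\alpha$, or simply by passing to $\delta^n\delta'\alpha$ for large $n$ as the paper does --- but as written the same-component case is not established.
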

\begin{proof}
Let $B$ be a compressing bigon, let $\alpha$ be the arc of $\partial B$ proper in $S_i$,
and let $\beta$ be the other arc, mapping to $Z$. 

If the two vertices of $B$ are on the
same component $\delta$ of $\partial S_i$, let $\delta'$ be a subarc of $\partial S_i$ joining these
vertices. Then, for any $n$, the concatenation $\delta^n \delta' \alpha$ is a loop in $S_i$ homotopic to
$\delta^n \delta' \beta$ which is in $Z$, so $\delta^n \delta' \alpha$ is conjugate into $G$.  
Moreover, $\delta^n \delta' \alpha$ is essential, and is not boundary parallel in $S_i$ for all 
but at most finitely many values of $n$, or else $\alpha$ would be properly homotopic into $\delta$.

If the two vertices of $B$ are on different components $\delta$, $\delta'$ of $\partial S_i$,
for any nonzero $n,m$ the loop $\delta^n \alpha (\delta')^m \alpha^{-1}$ is essential in $S_i$ 
and homotopic into $Z$, and for all but finitely many $n,m$ it is not homotopic into $\partial S_i$.
\end{proof}

\section{Folded fatgraphs}

\subsection{Fatgraphs}

\begin{definition}
A {\em fatgraph} is a graph $Y$ together with a cyclic order on the edge incident to each vertex.
\end{definition}

A fatgraph admits a canonical {\em fattening} to a compact, oriented surface $S(Y)$ in such a way 
that $Y$ includes in $S(Y)$ as a spine, and there is a canonical deformation retraction 
of $S(Y)$ to $Y$. 

Pulling back the simplicial structure of $Y$ gives $\partial S(Y)$ the structure of a graph.

\begin{definition}
A {\em fatgraph over $X$} is a fatgraph $Y$ whose underlying graph is a graph over $X$.  
A fatgraph $Y$ over $X$ is {\em folded} if the underlying graph is folded.
\end{definition}

Suppose $Y$ is a folded fatgraph over $X$. Then the composition $S(Y) \to Y \to X$ is an 
injection on $\pi_1$.

\subsection{Relative fatgraphs}

Let $F$ be a free group, and suppose we have fixed a rose $X$ with $\pi_1(X)=F$.  
Let $f_i:Z_i\to X$ be finitely many immersions of finite graphs and let $f:Z\to X$ be their coproduct.  
Equivalently, we may fix a finite set of subgroups $G_1,\ldots, G_n$ and set $Z$ to be the disjoint 
union of the corresponding core graphs $Z_{G_i}$.

\begin{definition}
A folded fatgraph $Y$ over $X$ has {\em boundary in $Z$} if for every component $\partial_i$ of 
$\partial S(Y)$ the image of this loop in $\pi_1(X)$ lifts to $Z$.
\end{definition}

The immersion of $Y$ into $X$ extends naturally to a map $\sigma$ of $S(Y)$ into the mapping 
cylinder $C_f$ such that the boundary $\partial S(Y)$ immerses into the natural copy of $Z$.  
That is, we have the following commutative diagram
\[\xymatrix{
    Y \ar@{>}[r]\ar@{>}[d] & S(Y)\ar@{>}[d]^\sigma & \partial S(Y)\ar@{>}[l]\ar@{>}[d]\\
     X\ar@{>}[r] & C_f & Z\ar@{>}[l]^f
}\]
where the inclusions $Y\to S(Y)$ and $X\to C_f$ are both deformation retracts.

\begin{definition}
A folded fatgraph $Y$ over $X$ with boundary in $Z$ is {\em boundary incompressible} if every 
essential loop in $S(Y)$ whose image lifts to $Z$ is already freely homotopic (in $S(Y)$) into 
$\partial S(Y)$.
\end{definition}

It is evident that the condition of being boundary incompressible rules out the  existence of a 
compressing bigon, by Lemma~\ref{lemma:bigon_gives_loop_in_G}.  Our next result strengthens this 
criterion.  To state the result cleanly, we will make use of the \emph{fibre product}.

Let $p_i:\widehat{C}_i\to C_f$ be the covering space of the mapping cylinder $C_f$ corresponding 
to the conjugacy class of the subgroup $G_i$.  The covering space $\widehat{C}_i$ can be 
constructed as follows: if $\widehat{X}_i$ is the covering space of $X$ corresponding to 
the conjugacy class of $G_i$ then the immersion $f_i:Z_i\to X$ lifts to an embedding 
$\hat{f}_i:Z_i\to \widehat{X}_i$, which identifies $Z_i$ with the core of $\widehat{X}_i$; 
$\widehat{C}_i$ is the  mapping cylinder of $\hat{f}_i$.

\begin{definition}
The space
\[
S(Y)\times_{C_f} \widehat{C}_{i}=\{(x,y)\in S(Y)\times \widehat{C}_{i}\mid \sigma(x)=p_i(y)\}~,
\]
is called the \emph{fibre product} of the maps $\sigma$ and $p_i$.
\end{definition}

It is a standard exercise to check that the projection $S(Y)\times_{C_f} \widehat{C}_i\to S(Y)$ 
is a covering map.

\begin{proposition}\label{prop:boundary_compressible_in_G}
Let $Y$ be a folded fatgraph with boundary in $Z$. If $S(Y)$ is boundary compressible then there is:
\begin{enumerate}
\item a homotopically non-trivial loop $\gamma$ in the boundary of $S(Y)$,
\item an essential, non-boundary-parallel loop $\gamma'$ in $S(Y)$, and
\item a connected component $\widehat{S}(Y)$ of one of the fibre products $S(Y)\times_{C_f} \widehat{C}_i$,
\end{enumerate}
such that both $\gamma$ and $\gamma'$ lift to $\widehat{S}(Y)$.
\end{proposition}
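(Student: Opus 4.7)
\emph{Proof plan.} My strategy is to construct a one-parameter family $\{\gamma_n\}$ of essential non-peripheral loops in $S(Y)$ that all lie in a common subgroup $H:=\sigma_*^{-1}(G_j)\leq\pi_1(S(Y),v)$, and hence all lift to a single component $\widehat{S}(Y)$ of $S(Y)\times_{C_f}\widehat{C}_j$; cancellation between different members of the family will then produce a nontrivial boundary loop $\gamma\in H$, automatically also lifting to $\widehat{S}(Y)$. The first step is a relative analogue of Lemma~\ref{lemma:compressing_bigon_exists}: I claim boundary compressibility implies that $\sigma\colon(S(Y),\partial S(Y))\to(C_f,Z)$ admits a compressing bigon. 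Given an essential non-peripheral $\eta\subset S(Y)$ with $\sigma(\eta)$ freely homotopic in $C_f$ to a loop in some $Z_j$, the homotopy supplies an annulus $A\colon S^1\times I\to C_f$ with $A|_0=\sigma(\eta)$ and $A|_1\subset Z_j$; after putting $A$ in general position with respect to $Z\cup\sigma(S(Y))$ and removing inessential intersection components, an outermost region of $A\setminus A^{-1}(Z\cup\sigma(S(Y)))$ yields a bigon whose boundary arcs are a properly embedded essential arc $\alpha$ in $S(Y)$ and an arc $\beta$ in $Z$ (connectedness of $\beta$ forces it into a single $Z_j$).

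With this compressing bigon, the argument of Lemma~\ref{lemma:bigon_gives_loop_in_G} produces, for all but finitely many parameter values, essential non-peripheral loops of the form $\gamma_n=\delta^n\delta'\alpha$ (one-boundary case) or $\gamma_{n,m}=\delta^n\alpha(\delta')^m\alpha^{-1}$ (two-boundary case). Based at a bigon vertex $v\in\partial S(Y)$ with $\sigma(v)\in Z_j$, the rel-endpoints homotopy $\sigma(\alpha)\simeq\sigma(\beta)$ computes $\sigma_*(\gamma_n)\in\pi_1(Z_j,\sigma(v))=G_j$ for every parameter value, so each $\gamma_n\in H$ lifts to $\widehat{S}(Y)$. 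A direct cancellation then gives $\gamma_n\gamma_m^{-1}=\delta^{n-m}$ in the one-boundary case, and $\delta^{n-n'}$ or a conjugate of $(\delta')^{m-m'}$ in the two-boundary case; specializing to exponent $1$ places $\delta$ (or $\delta'$) itself into $H$. Setting $\gamma:=\delta$ and $\gamma':=\gamma_n$ for $n$ large enough to be essential and non-peripheral completes the proof.

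The main obstacle is the first step above. The boundary-compressibility hypothesis supplies only a free homotopy --- i.e.\ an annulus in $C_f$ --- rather than a disk, so extracting a compressing bigon is essentially a relative loop-theorem situation. The outermost-region argument must be adapted to a double transversality with respect to both $Z$ and $\sigma(S(Y))$, and one must verify that the arc it extracts is both proper in $S(Y)$ and essential; without this step, a single $\eta$ in $H$ need not force any boundary power into $H$ (as one sees already for cyclic subgroups of a pair-of-pants group generated by a non-peripheral element).
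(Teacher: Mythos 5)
Your proposal correctly notices something the paper passes over: the proof of Proposition~\ref{prop:boundary_compressible_in_G} opens with ``Suppose $S(Y)$ is boundary compressible, so there is some compressing bigon $B$,'' but the implication from boundary compressibility (an \emph{essential non-peripheral loop} whose image lifts to $Z$) to the existence of a compressing bigon (a \emph{proper essential arc} $\alpha$ whose $\sigma$-image is rel-endpoints homotopic into $Z$) is exactly the converse of what is observed just before the proposition, and is not itself justified in the paper. You are also right that this step cannot be skipped: as your pair-of-pants remark indicates, merely having $\eta\in H=\sigma_*^{-1}(G_j)$ does not by itself put any boundary loop into the same component of the fibre product, so the bigon (which ties $\eta$ to a boundary point via the arc $\alpha$) is doing essential work. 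And once a compressing bigon is in hand, your Step~2 is a correct variant of the paper's argument --- though it is needlessly roundabout: since $\delta$ is a boundary loop based at $v$ with $\sigma(\delta)\subset Z_j$, one has $\sigma_*[\delta]\in\pi_1(Z_j,\sigma(v))=G_j$ directly, so the $\gamma_n\gamma_m^{-1}$ cancellation adds nothing; and the paper instead just observes that $\langle\gamma,\gamma''\rangle$ is free of rank $2$ (because $\gamma''$ is not homotopic into $\gamma$) and therefore contains essential non-peripheral curves.

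The genuine problem is that your proposed proof of the missing step does not go through as written. You put the annulus $A\colon S^1\times I\to C_f$ ``in general position with respect to $Z\cup\sigma(S(Y))$'' and take preimages --- but $\sigma(S(Y))$ is merely the \emph{image} of a continuous, non-injective map, not a subcomplex of $C_f$, so ``$A^{-1}(\sigma(S(Y)))$'' is not a meaningful transversality construct. Even if one replaces it by the map-level fibre product $(S^1\times I)\times_{C_f}S(Y)$, that object is $2$-dimensional (since $C_f$ is $2$-dimensional), so one does not get the $1$-manifold of intersection arcs that an outermost-region argument requires; and even granting a $1$-complex of double curves, you still have to show that an outermost arc pulls back to a \emph{properly embedded, essential} arc of $S(Y)$ rather than, say, a boundary-parallel or non-embedded one. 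You flag these issues yourself at the end of the write-up, and they are real: as it stands this step is a sketch of a relative loop theorem, not a proof of it. A cleaner route would be to work entirely in the fibre product $\widehat S(Y)\subset S(Y)\times_{C_f}\widehat C_j$ that contains the lift $\hat\eta$: lift the free homotopy from $\sigma(\eta)$ into $Z_j$ to $\widehat C_j$ (possible since $\hat\eta$'s image is already there), homotope $\hat\eta$ into the compact core $\widehat Z_j\subset\widehat C_j$, and use the resulting compactness to locate a compact boundary circle of $\widehat S(Y)$ --- which is precisely the loop $\gamma$ you need. That, or simply state the proposition with the hypothesis ``$S(Y)$ admits a compressing bigon'' (which is all that is used downstream, in Proposition~\ref{prop:f_folded_boundary_incompressible} via Lemmas~\ref{lemma:compressing_bigon_exists} and \ref{lemma:bigon_gives_loop_in_G}), and leave the bigon existence to the reader.
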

\begin{proof}
Suppose $S(Y)$ is boundary compressible, so there is some compressing bigon $B$
as in Lemma~\ref{lemma:bigon_gives_loop_in_G} with one boundary arc $\alpha$ proper in
$S$.  We will give a proof in the case that both endpoints of $\alpha$ lie on the same boundary 
component $\gamma$; the proof in the other case is similar.

Fix a base point $*\in\gamma\cap\alpha$.  We may then take $\sigma(*)$ as a compatible base point 
in some $Z_i$, so $\pi_1(Z_i,\sigma(*))$ becomes a natural choice of representative for $G_i$ 
inside its conjugacy class.  This in turn defines a base point $\hat{*}$ in the covering space 
$\widehat{C}_i$.  Since $\sigma(*)=p(\hat{*})$, the pair $(*,\hat{*})$ defines a point in the 
fibre product $S(Y)\times_{C_f} \widehat{C}_i$.  Let $\widehat{S}(Y)$ be the component of 
$S(Y)\times_{C_f} \widehat{C}_i$ that contains $(*,\hat{*})$. It follows from standard 
covering-space theory that
\[
\pi_1(\widehat{S}(Y),(*,\hat{*}))=\sigma_*^{-1}\pi_1(\widehat{C}_i,\hat{*})
\]
when thought of as a subgroup of $\pi_1(S(Y),*)$.  In particular, $\gamma$ lifts to $\widehat{S}(Y)$.

Let $\delta$ be the subarc of $\gamma$ with the same endpoints as $\alpha$ and let 
$\gamma''=\delta\cup\alpha$.  Since $\delta\subseteq Z_G$ and $\alpha$  bounds one half of a 
bigon whose other side is in $G$, we see that $\sigma(\gamma'')$ is contained in $G$, and so 
$\gamma''$ also lifts to $\widehat{S}(Y)$.  Indeed, the entire subgroup of $\pi_1(S(Y),*)$ 
generated by $\gamma$ and $\gamma''$ also lifts to $\widehat{S}(Y)$.

Because $\gamma''$ is not homotopic into $\gamma$, this subgroup is free of rank two, and so 
contains elements represented by curves which are essential and not boundary parallel.  
Let $\gamma'$ be any such curve.
\end{proof}

\section{Random subgroups}

\subsection{Random homomorphisms}\label{subsection:random_homomorphisms}

Fix integers $k,l\ge 2$ and let $F_k$, $F_l$ be free groups on $k$, $l$ generators
respectively.

\begin{definition}
A {\em random homomorphism of length $n$} is a homomorphism $\phi:F_k \to F_l$ 
which takes each generator of $F_k$ to a random reduced word in $F_l$ of length $n$, 
independently and with the uniform distribution.
\end{definition}

If we fix $F$ free and $X$ a rose for $F$ (and therefore, implicitly, a free generating
set for $F$) then if we pick $k$, it makes sense to define a {\em random $k$-generator
subgroup of $F$ of length $n$} to be the image of $F_k \to F$ under a random 
homomorphism of length $n$. We suppose $G \subseteq F$ is a random $k$-generator
subgroup of length $n$.

The construction of a random subgroup chooses for us a generating set for $G$.
The core graph $Y_G$ is obtained from a rose $R_G$ for $G$ (with edges corresponding
to the given generating set) by writing new edge labels which are reduced words in $F$,
and then folding the result. 

\begin{lemma}\label{lem:random_folding}
Let $\phi:F_k \to F_l$ be a random homomorphism of length $n$. Then
with probability $1-O(e^{-Cn})$ the result of folding the rose $R_G \to Y_G$
is a homotopy equivalence, and is an isomorphism away from a neighborhood
of the vertex of $R_G$ of diameter $O(\log{n})$.
\end{lemma}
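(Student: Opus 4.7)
The plan is to analyze the Stallings folding $R_G \to Y_G$ as a breadth-first search emanating from the basepoint $v_0$ of $R_G$. After subdividing each of the $k$ loops so that every edge carries a single letter of the generating set of $F$, all interior vertices have valence two; because each $w_i := \phi(x_i)$ is reduced, the two labels at any interior vertex are never mutually inverse, so no fold can be initiated away from $v_0$. All folding activity is therefore driven by coincidences among the $2k$ directed half-edges at $v_0$, which correspond to reading the strings $w_1,\ldots,w_k,w_1^{-1},\ldots,w_k^{-1}$ outward from $v_0$; a fold extends along two such strings to a depth equal to the length of their common prefix.

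For the ``isomorphism outside a diameter-$O(\log n)$ neighborhood'' clause I would compute directly that the probability two independent uniform random reduced words of length $n$ share a common prefix of length $\geq d$ is at most $(2l-1)^{-(d-1)}$: the first letters agree with probability $1/(2l)$, and given agreement to depth $j$, both continuations are uniform on the same set of $2l-1$ non-cancelling letters, so the next letters agree with probability $1/(2l-1)$. A union bound over the $\binom{2k}{2}$ string pairs (with a minor extra argument handling the dependent pairs $(w_i,w_i^{-1})$, for which a prefix of $w_i$ of length $d$ equalling the inverse of its length-$d$ suffix has the same order of probability) controls the event that some pair has common prefix exceeding $d$.

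For the homotopy-equivalence clause I would use that $R_G\to Y_G$ is a homotopy equivalence iff $\mathrm{rank}(Y_G)=k$, iff $\phi$ is injective. The key observation is that a rank-reducing fold during the BFS forces the tree of folded prefixes rooted at $v_0$ to wrap completely around one of the $k$ loops: either two distinct strings in $\{w_i^{\pm 1}\}$ must agree on a prefix of length $\geq n/2$, or some single $w_i$ must have its first $\lceil n/2\rceil$ letters equal to the inverse of its last $\lceil n/2\rceil$ letters. Summing the per-event probability $O((2l-1)^{-n/2})$ over the $O(k^2 n^2)$ choices of starting-position pair and word pair yields the $O(e^{-Cn})$ bound on failure of injectivity. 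An alternative route is to invoke the fact noted elsewhere in the paper that random tuples satisfy $C'(\lambda)$ for any fixed $\lambda > 0$ with probability $1 - O(e^{-Cn})$, from which injectivity follows directly.

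The main obstacle is the reduction from ``some fold in the sequence reduces rank'' to ``a single long coincident subword exists.'' A rank-reducing fold late in the process may identify two vertices whose previous agglomeration was the cumulative effect of many intermediate folds, so one has to trace the BFS back to $v_0$ and exhibit the long prefix coincidence at the root. The cleanest way is to observe that any cycle created in the folded graph must arise from two BFS paths in the fold-tree that fuse at an already-identified vertex, and to back-propagate this fusion to the wrap-around event above.
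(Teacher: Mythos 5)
Your argument follows the same route as the paper's one‑sentence proof — Stallings folding of the rose is governed entirely by coincidences among the $2k$ strings $w_i^{\pm 1}$ read outward from $v_0$, and for random reduced words those coincidences are short — but you supply the combinatorial details the paper leaves implicit, and your structural observations (interior vertices are $2$‑valent with non‑inverse labels, so nothing folds away from $v_0$) are correct.

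The ``main obstacle'' you flag at the end does not in fact arise, and the remedy you gesture at is the right one. Once the BFS from $v_0$ has reached the divergence points, \emph{no further folds are possible}: at a divergence vertex the outgoing labels are pairwise distinct by definition, and none equals the inverse of the incoming label by reducedness of the $w_i$; at a $2$‑valent interior vertex the labels are non‑inverse for the same reason. So the BFS builds a trie $T$ of depth exactly $d_{\max}$, the maximal pairwise common prefix among the $2k$ strings, and the final graph is $T$ together with the surviving middle arcs of the $k$ loops. A fold at BFS depth $a$ is rank‑reducing only if the two target vertices were already identified; this forces one of them to also have been visited at some BFS depth $\le a$ from the opposite direction of its loop, i.e.\ $n-a\le d_{\max}$ as well as $a\le d_{\max}$, giving $d_{\max}\ge n/2$. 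This closes the gap cleanly with no ``back‑propagation'' case analysis. Separately, your case (b) is vacuous: if the first $\lceil n/2\rceil$ letters of $w_i$ equalled the inverse of its last $\lceil n/2\rceil$, then the two central letters of $w_i$ would cancel (or, for odd $n$, the middle letter would be its own inverse), contradicting reducedness; so the dependent pair $(w_i,w_i^{-1})$ contributes nothing to the rank‑drop bound. Finally, note that, as stated, ``diameter $O(\log n)$ with probability $1-O(e^{-Cn})$'' is informal: your estimate $P(\text{common prefix}\ge d)\le (2l-1)^{-(d-1)}$ yields failure probability $O(e^{-Cn})$ only when $d=\Omega(n)$, whereas $d=O(\log n)$ gives only a polynomially small failure probability. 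This imprecision is already present in the paper's one‑line proof, and the bound actually used downstream (Lemma~\ref{lem:random_is_malnormal}) only requires the folded region to have radius, say, $<n/4$, which your estimate does give with exponentially small failure probability.
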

\begin{proof}
This lemma is simply the observation that two random reduced words in $F_l$
of length $n$ have a common prefix or suffix of length at most $O(\log{n})$ with
probability $1-O(e^{-Cn})$.
\end{proof}

\begin{definition}\label{def:pseudorandom}
Fix a free group $F$ of rank $l$ and a free generating set. A cyclically reduced word $w$ in the
generators is $(T,\epsilon)$-pseudorandom if for every reduced word $\sigma$ in $F$
of length $T$, the number of copies of $\sigma$ in $w$ (denoted $C_\sigma(w)$) satisfies
$$1-\epsilon \le \frac {C_\sigma(w)} {\length(w)}\cdot (2l)(2l-1)^{T-1} \le 1+\epsilon$$
\end{definition}

In a free group of rank $l$ there are $(2l)(2l-1)^{T-1}$ reduced words of length $T$.
So informally, a cyclically reduced word is $(T,\epsilon)$-pseudorandom if its distribution
of subwords of length $T$ is distributed as in a random word, up to an error of order 
$\epsilon$.

We also extend the definition of pseudorandom to finite collections of cyclically reduced
elements. Moreover, by abuse of notation, if we fix a rose $X$ for $F$, we say that
an immersed $1$-manifold $\Gamma \to X$ is $(T,\epsilon)$-pseudorandom if the corresponding
collection of cyclic words in $F$ is $(T,\epsilon)$-pseudorandom.

\begin{lemma}\label{lem:random_is_pseudorandom}
Fix any positive integer $T$, and $\epsilon > 0$. 
Then if $\phi:F_k \to F_l$ is a random homomorphism of length $n$, 
with probability $1-O(e^{-Cn})$ the cyclically reduced
representative of $\phi(g)$ is $(T,\epsilon)$-pseudorandom for every nontrivial $g$ in $F_k$.
\end{lemma}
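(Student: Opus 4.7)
The plan is to reduce the uniform statement over all $g \in F_k$ to two finite-union-bound events about the random generators, after which $(T,\epsilon)$-pseudorandomness of the cyclic reduction of $\phi(g)$ follows by concatenation. Concretely, for each generator $a$ of $F_k$, $\phi(a)$ is a random reduced word of length $n$ in $F_l$ whose letters form an ergodic Markov chain with uniform stationary distribution on the $2l$ letters. A standard Chernoff / large-deviation estimate for such chains implies that for any fixed reduced $\sigma$ of length $T$, the count $C_\sigma(\phi(a))$ concentrates around its mean $n/((2l)(2l-1)^{T-1})$ up to relative error $\epsilon/2$ with probability $1-e^{-Cn}$, where $C=C(T,\epsilon,l)>0$. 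Union-bounding over the finite set of length-$T$ words $\sigma$ and the $2k$ generators $a_i^{\pm1}$ keeps the failure probability at $O(e^{-Cn})$ and yields $(T,\epsilon/2)$-pseudorandomness of every $\phi(a_i)$.

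Next I would fix $\delta=\delta(T,\epsilon,l)>0$ small (to be chosen at the end) and control pairwise cancellation: for each ordered pair $(s,t)$ of generators with $s\ne t^{-1}$, the cancellation in $\phi(s)\phi(t)$ exceeds $\delta n$ only if the length-$\delta n$ suffix of $\phi(s)$ equals the inverse-reverse of the length-$\delta n$ prefix of $\phi(t)$, an event of probability $O((2l-1)^{-\delta n})=O(e^{-C\delta n})$. Union-bounding over the $O(k^2)$ pairs, every such cancellation is at most $\delta n$ with probability $1-O(e^{-Cn})$.

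Conditioning on both events, I take any nontrivial $g\in F_k$ written cyclically reduced as $g_1\cdots g_m$. The cyclic reduction $w$ of $\phi(g)$ is the cyclic concatenation of trimmed pieces $\tilde\phi(g_i)$, where at most $\delta n$ letters are removed from each side of each $\phi(g_i)$; hence $\length(w)\ge mn(1-2\delta)$, and $C_\sigma(w)$ differs from $\sum_i C_\sigma(\phi(g_i))$ by $O(m(\delta n+T))$, accounting both for trimmed contributions and for subwords crossing junctions. Combined with pseudorandomness of the $\phi(a_i)$ and dividing through, the ratio $C_\sigma(w)/\length(w)\cdot(2l)(2l-1)^{T-1}$ lies in $[1-\epsilon,1+\epsilon]$ once $\delta$ is small and $n$ is large (depending only on $T,\epsilon,k,l$). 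The main conceptual point is that this estimate is uniform in $m=|g|$, so the infinitely many $g$ are all handled by the finitely many generator-level events; identifying this uniformity is the main obstacle, and the remaining steps are routine Chernoff arguments and bookkeeping.
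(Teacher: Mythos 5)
Your proposal is correct and follows essentially the same route as the paper's (much terser) proof: establish $(T,\epsilon')$-pseudorandomness of each generator image by a Chernoff bound, control the cancellation at the junctions, and conclude uniformly over $g$ by a bookkeeping argument. The one small point of divergence is that you bound the cancellation by $\delta n$ with failure probability $O(e^{-C\delta n})$, whereas the paper cites an $O(\log n)$ cancellation bound; your linear threshold is the cleaner choice here, since it is what actually yields the claimed exponential failure probability while still being negligible relative to $n$ in the final ratio estimate.
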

\begin{proof}
A random word in $F_l$ of length $n$ is $(T,\epsilon)$-pseudorandom with probability
$1-O(e^{-Cn})$. Each generator of $F_k$ gets taken by $\phi$ to a random word in $F_l$ of
length $n$, and non-inverse generators get taken to words with at most $O(\log{n})$
cancellation, with probability $1-O(e^{-Cn})$. The proof follows.
\end{proof}

\subsection{Combinatorial Rigidity}

A reduced word in $F$ is represented by an immersed path $\delta:[0,1] \to X$. A cyclically
reduced word in $F$ is represented by an immersed loop $\gamma:S^1 \to X$. Every element
of $F$ is uniquely represented by a reduced word, and every conjugacy class is uniquely
represented by a cyclically reduced word.

\begin{definition}
Let $f:Z\to X$ be an immersion of finite graphs.  A loop $\gamma:S^1\to X$ is 
{\em combinatorially rigid in $Z$} if it admits a {\em unique} lift $\gamma:S^1 \to Z$.

Let $G_1,\ldots, G_n$ be a finite collection of subgroups of $F$, represented by an immersion 
$f:Z\to X$.  A conjugacy class represented by an element $g$ in $F$ is 
\emph{combinatorially rigid in the $G_i$} if the unique geodesic loop $\gamma:S^1\to X$ that 
represents it is combinatorially rigid in $Z$.  It is {\em fully combinatorially rigid in the $G_i$} 
if the conjugacy classes of $g$ and all its (nontrivial) powers are combinatorially rigid in the $G_i$.
\end{definition}

Note that a loop $\gamma$ which is (fully) combinatorially rigid in the $G_i$ is necessarily 
conjugate into a unique $G_i$, by definition.  In fact, combinatorial rigidity corresponds to 
a well known algebraic condition.

\begin{definition}\label{def:malnormal}
Let $G$ be a group. A subgroup $H$ of $G$ is {\em malnormal} if $H \cap H^g = 1$ for every 
$g \in G-H$. More generally, a family of subgroups $\{H_i\}$ is \emph{malnormal} if 
$H_i \cap H_j^g = 1$ whenever $i\neq j$ or $g \in G-H_i$.
\end{definition}

\begin{proposition}\label{prop:malnormal_rigid}
Let $\{G_i\}$ be a family of subgroups of $F$. Then $\{G_i\}$ is malnormal if and only if 
for every $j$, every nontrivial conjugacy class in $G_j$ is fully combinatorially rigid in the $G_i$.
\end{proposition}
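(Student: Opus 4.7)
The plan is to translate both malnormality and combinatorial rigidity into conditions on cosets, after which the equivalence becomes a direct comparison. Fix a basepoint $x_0\in X$ with $\pi_1(X,x_0)=F$. Each cover $\widehat{X}_{G_i}$ comes with a basepoint $\hat{*}_i$ over $x_0$ giving $\pi_1(\widehat{X}_{G_i},\hat{*}_i)=G_i$; the vertices of $\widehat{X}_{G_i}$ over $x_0$ are then in natural bijection with $F/G_i$. If $\gamma:S^1\to X$ is a cyclically reduced loop based at $x_0$ representing $w\in F$, then $\gamma$ lifts to a loop at the vertex $hG_i$ precisely when $h^{-1}wh\in G_i$. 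Since $\gamma$ is an immersion, so is any such lift, and its image --- being compact with no valence-one vertices --- lies automatically inside the core $Z_{G_i}$. Summing over $i$, the parameterized lifts of $\gamma$ to $Z=\bigsqcup_i Z_{G_i}$ are in bijection with the set
\[
L(w)=\{(i,hG_i)\mid h^{-1}wh\in G_i\},
\]
and $\gamma$ is combinatorially rigid if and only if $|L(w)|=1$.

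With this dictionary in hand, the forward direction is almost immediate. Assume $\{G_i\}$ is malnormal and let $w\in G_j$ be nontrivial and $n$ a nonzero integer. The pair $(j,G_j)$ certainly lies in $L(w^n)$; conversely, if $(i,hG_i)\in L(w^n)$ then $h^{-1}w^n h$ is a nontrivial element of $G_i\cap h^{-1}G_j h$, and malnormality forces $i=j$ and $h\in G_j$, so $hG_i=G_j$. Hence $|L(w^n)|=1$ for every nonzero $n$, proving full combinatorial rigidity of the conjugacy class of $w$.

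For the converse I would argue by contrapositive. Suppose $1\neq u\in G_i\cap h^{-1}G_j h$ for some $h\in F$ with either $i\neq j$ or $h\notin G_i$, and set $v=huh^{-1}\in G_j$, which is nontrivial. Then $(j,G_j)\in L(v)$ by construction, and $(i,hG_i)\in L(v)$ since $h^{-1}v h=u\in G_i$. These two lifts are distinct: they sit in different components of $Z$ when $i\neq j$, and give distinct cosets of $G_i$ when $i=j$ (because $h\notin G_i$). Therefore $|L(v)|\geq 2$, so the conjugacy class of $v\in G_j$ already fails to be combinatorially rigid (even before taking powers), contradicting the hypothesis.

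The main point needing care is the coset bijection in the first step, particularly the verification that every lift of an immersed loop to the cover $\widehat{X}_{G_i}$ automatically lies in the core $Z_{G_i}$ rather than wandering out into the attached trees. Once this is in place, both implications are essentially formal: combinatorial rigidity of $w^n$ for all nonzero $n$ is a restatement of malnormality applied to the elements $w^n$, and in fact the converse direction only uses combinatorial rigidity of first powers, so passing to full rigidity is slight overkill there.
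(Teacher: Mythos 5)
Your proof is correct and takes essentially the same approach as the paper: both translate combinatorial rigidity of the geodesic loop for $w$ into a statement about lifts to the covers $\widehat{X}_{G_i}$, identify those lifts with cosets $hG_i$ satisfying $h^{-1}wh\in G_i$, and observe that uniqueness of such a coset for $w$ and all of its nontrivial powers is exactly malnormality. Your write-up is a bit more explicit than the paper's (spelling out the coset dictionary and checking that lifts of immersed loops land in the core rather than the attached trees), but the underlying argument is the same.
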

\begin{proof}
First, suppose that the $G_i$ are malnormal, and let $f:Z\to X$ be the corresponding immersion of 
finite graphs.  Fix a base point on the image of $\gamma$ and let $g$ be the corresponding  
element of $G_j$.  Replacing $g$ by $g^n$, it suffices to show that $\gamma$ is 
combinatorially rigid in $Z$.  Consider a lift $\gamma_1:S^1\to Z_k\subseteq Z$.  A second 
such lift $\gamma_2$ contradicts malnormality immediately unless $j=k$, in which case it 
corresponds to an element $h\in F\smallsetminus G_j\langle g\rangle$ such that $hgh^{-1}\in G_i$ 
for some $i$, which also contradicts malnormality.

Conversely, suppose the $G_i$ are not malnormal, so that $g^h \in G_k$ and either $j\neq k$ or 
$h \in F-G_j$. Then the immersed loop $\gamma:S^1 \to X$ representing the conjugacy class of $g$ 
has two distinct lifts to $Z$, corresponding to the elements $g$ and $g^h$.
\end{proof}

\begin{lemma}\label{lem:random_is_malnormal}
Let $N$ be fixed and finite, and let $\phi_i:G=F_k \to F_l$ for $1\le i \le N$ be a finite 
collection of random homomorphisms  of length $n$.  Then the 
family of images $\{\phi_i(G)\}$ is malnormal  with probability $1-O(e^{-Cn})$.  
In particular, the same holds for a single random homomorphism.
\end{lemma}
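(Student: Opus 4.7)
By Proposition~\ref{prop:malnormal_rigid}, malnormality of $\{\phi_i(G)\}$ is equivalent to full combinatorial rigidity in $Z := \bigsqcup_i Z_{\phi_i(G)}$ of every nontrivial conjugacy class in each $\phi_j(G)$. My plan is to show that, with probability $1-O(e^{-Cn})$, no geodesic loop $\gamma:S^1\to X$ representing a nontrivial element or proper power of any $\phi_j(G)$ admits two distinct lifts to $Z$. The contradiction will come from the fact that two distinct lifts force some pair among the $Nk$ random words $w_s^{(i)}:=\phi_i(a_s)$, or one such word and a nonzero cyclic shift of itself, to share a common substring of length $\Omega(n)$; this is exponentially unlikely even after union-bounding over the $O(N^2 k^2 n^2)$ possible configurations of indices and positions.

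First I would isolate a good event $\mathcal{G}$ with $\Pr(\mathcal{G}) = 1-O(e^{-Cn})$ on which (a) Lemma~\ref{lem:random_folding} applies to each $\phi_i$, so the nontrivial identifications of the labeled rose lie in a subtree $T_i\subset Z_{\phi_i(G)}$ of diameter $O(\log n)$ (the ``central region''), and each component of $Z_{\phi_i(G)}\setminus T_i$ is an embedded ``petal middle'' of length $\geq n - O(\log n)$ labelled by an interior substring of some $w_s^{(i)}$; and (b) no two of the $Nk$ words share a common substring of length $\geq n/10$ at distinct positions, and no $w_s^{(i)}$ has a self-overlap at nonzero shift of length $\geq n/10$. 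Condition (a) is Lemma~\ref{lem:random_folding}, and (b) is a standard random-word counting estimate: each specific position-pair gives probability $O((2l-1)^{-n/10}) = O(e^{-Cn})$, and the union bound is over polynomially many configurations.

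Now suppose on $\mathcal{G}$ that some such $\gamma$ has two distinct lifts $\gamma_1,\gamma_2:S^1\to Z$. Since every nontrivial immersed loop in each $Z_{\phi_i(G)}$ traverses at least one petal, $\gamma_1$ contains a sub-arc $\alpha\subset S^1$ of length $\geq n-O(\log n)$ with $\gamma_1(\alpha)$ embedded in a single petal middle of some $Z_{\phi_{i_1}(G)}$, and hence labelled by a specific substring of $w_s^{(i_1)}$. The restriction $\gamma_2|_\alpha$ lies in some $Z_{\phi_{i_2}(G)}$; since each petal middle has length $\geq n-O(\log n)$ while $|\alpha| \leq n-O(\log n)$, a counting argument forces $\gamma_2|_\alpha$ to make at most one excursion into $T_{i_2}$, of length $O(\log n)$. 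Deleting this excursion yields a sub-arc $\beta\subseteq\alpha$ of length $\geq n/4$ with $\gamma_2(\beta)$ embedded in a single petal middle of $Z_{\phi_{i_2}(G)}$, labelled by a substring of $w_{s'}^{(i_2)}$. Then $\gamma(\beta)$ realises a common substring of length $\geq n/4$ between $w_s^{(i_1)}$ and $w_{s'}^{(i_2)}$ at specific positions determined by $\gamma_1,\gamma_2$; since distinct lifts disagree everywhere, these positions are distinct, violating (b).

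The main obstacle is the bookkeeping in the extraction step: carefully counting central-region excursions of $\gamma_2|_\alpha$ to guarantee a linear-length sub-arc on which both lifts stay in petal interiors, uniformly over all $\gamma$. The random-word inputs behind (b) are standard. The single-homomorphism case is the specialisation $N=1$.
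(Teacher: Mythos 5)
Your proposal is correct and follows essentially the same strategy as the paper's proof: reduce malnormality to full combinatorial rigidity via Proposition~\ref{prop:malnormal_rigid}, note that two distinct lifts of a geodesic loop to $Z$ would force a long (length $\Omega(n)$) common subword between two of the $Nk$ random generator words or a long nontrivial self-overlap of one of them, and bound the probability of that event by a standard union bound. The paper's version is terser --- it avoids your excursion-counting by fixing a specific length-$n/2$ ``middle'' subword $w_{e,i}$ at a designated position on each petal and observing that with high probability it occurs uniquely in $\sqcup_i Y_{\phi_i}$, from which rigidity follows immediately since every nontrivial immersed loop must traverse a petal middle --- but the underlying ideas are identical.
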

\begin{proof}
Let $\sigma$ be any reduced word of length $n/2$ in $F_l$. If $w$ is a random reduced
word of length $n$, the probability that $w$ contains a copy of $\sigma$ is $O(e^{-Cn})$.
Moreover, if $w$ is a random reduced word of length $n$ conditioned to contain a copy
of $\sigma$, the probability that it contains more than one copy of $\sigma$ is $O(e^{-Cn})$.

The homomorphism $\phi_i$ assigns a random word of length $n$ to each edge of the rose $R_G$, 
and with probability $1-O(e^{-Cn})$ adjacent edges are folded at most $O(\log{n})$ in the 
corresponding folded graph $Y_{\phi_i}$.  For each edge $e$ of $R_G$, let $w_{e,i}$ be the 
subword of length $n/2$ in $Y_{\phi_i}$ starting at some fixed location in the interior. Then 
with probability $1-O(e^{-Cn})$,  for each $e$, this is the unique copy of $w_e$ in 
$\sqcup_i Y_{\phi_i}$. It follows that every nontrivial loop in 
$\sqcup_i Y_{\phi_i}$ is combinatorially rigid, so $\{\phi_i(G)\}$ is malnormal.
\end{proof}

Now let's suppose that we have chosen a finite collection of elements $g_i \in G$ 
whose union is homologically trivial in $G$ and such that each $g_i$ is fully combinatorially 
rigid. As a collection of cyclic words
in the generators $a,b,c,$ etc. of $F$, 
there are as many $a$s as $A$s, as many $b$s as $B$s and so on.

\begin{definition}
Let $S(Y)$ be a folded fatgraph whose boundary components $\partial_i S(Y)$ are
conjugate to $g_i \in G$ cyclically reduced.
The {\em $f$-vertices} on $\partial S(Y)$ are the vertices corresponding to the 
(valence $>2$) vertices of $Z$. We say $S(Y)$ is {\em $f$-folded} if every $f$-vertex 
maps to a $2$-valent vertex of $Y$, and distinct $f$-vertices map to distinct vertices of $Y$.
\end{definition}

Notice that we need the $g_i$ to be combinatorially rigid in order to unambiguously
identify where the $f$-vertices lie on each $\partial_i S(Y)$.

The $f$-folded condition just means that the vertices of $Y$ which have valence at least $3$
all correspond to interior points on edges of $Z_G$, after identifying $\partial S(Y)$ with
its image in $Z_G$ in the unique manner guaranteed by combinatorial rigidity.

\begin{proposition}\label{prop:f_folded_boundary_incompressible}
Let $S(Y)$ be $f$-folded. Then $S(Y)$ is boundary incompressible.
\end{proposition}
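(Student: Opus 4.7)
The plan is to argue by contradiction using Proposition~\ref{prop:boundary_compressible_in_G}. Suppose $S(Y)$ is boundary compressible; then Proposition~\ref{prop:boundary_compressible_in_G} produces a boundary loop $\gamma \subset \partial S(Y)$, an essential non-boundary-parallel loop $\gamma'$ in $S(Y)$, and a component $\widehat S(Y)$ of some fibre product $S(Y) \times_{C_f} \widehat C_i$ such that both $\gamma$ and $\gamma'$ lift to $\widehat S(Y)$. The goal is to derive a contradiction from the $f$-folded hypothesis.

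The first step is to realize $\widehat S(Y)$ as a (possibly non-compact) fattened fatgraph. Since $\widehat S(Y) \to S(Y)$ is a covering of surfaces, $\widehat S(Y)$ is the fattening of a covering graph $\widehat Y$ of $Y$; the folded immersion $Y \to X$ lifts to a folded immersion $\widehat Y \to \widehat X_i$, and the deformation retraction $S(Y) \to Y$ lifts to $\widehat S(Y) \to \widehat Y$. Under the further deformation retraction $\widehat X_i \to Z_i$, every loop in $\widehat Y$ projects to a (cyclically) reduced edge loop in $Z_i$. The $f$-folded hypothesis on $\partial S(Y)$ lifts unchanged to $\partial \widehat S(Y)$: the $f$-vertices---i.e.\ the preimages in $\partial \widehat S(Y)$ of vertices of $Z_i$---retract to distinct $2$-valent subdivision vertices of $\widehat Y$. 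In particular, the lifted boundary circle $\tilde\gamma$ of $\gamma$ comes equipped with $f$-vertices sitting at distinct $2$-valent subdivision vertices of $\widehat Y$.

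The crucial step is to exploit this rigidity to show $\widehat S(Y)$ deformation retracts onto (the spine retract of) $\tilde\gamma$, whence $\pi_1(\widehat S(Y)) \cong \Z$; then every lifted essential loop is freely homotopic to an iterate of $\tilde\gamma$, so $\gamma'$ is freely homotopic in $S(Y)$ to an iterate of $\gamma$ and therefore boundary-parallel, contradicting our choice. To see the retraction, consider any reduced edge loop $\tilde\gamma'_Y$ in $\widehat Y$ obtained by retracting the lift $\tilde\gamma'$ of $\gamma'$. Its image in $X$ is a cyclically reduced word whose wedge-point-crossings occur either at vertices of $\widehat Y$ of valence $\geq 3$ or at $2$-valent subdivision vertices along edges. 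Those crossings in the image that correspond to vertices of $Z_i$ are, by $f$-folded, precisely the distinguished subdivision vertices already encountered by the retract of $\tilde\gamma$; the injectivity of the correspondence between $f$-vertices and subdivision vertices then forces $\tilde\gamma'_Y$ to traverse only those same subdivision vertices in the same cyclic order, so $\tilde\gamma'_Y$ coincides as a cyclic word with an iterate of the retract of $\tilde\gamma$.

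The main obstacle is making the middle step of the previous paragraph rigorous, and in particular dealing with the non-compactness of $\widehat S(Y)$: it will generally have infinitely many line boundaries coming from boundary components of $S(Y)$ not conjugate into $G_i$. I expect the essential point is that these line boundaries contribute only tree-like structure to $\widehat Y$, which can be collapsed before running the combinatorial rigidity argument on the compact subsurface carrying the non-trivial homotopy information. A careful bookkeeping---likely an Euler characteristic calculation on the finite subgraph of $\widehat Y$ determined by $\tilde\gamma$ together with the observation that vertices of $\widehat Y$ of valence $\geq 3$ never correspond to vertices of $Z_i$---should close the argument.
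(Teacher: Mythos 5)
Your overall strategy follows the paper's: invoke Proposition~\ref{prop:boundary_compressible_in_G}, realize $\widehat{S}(Y)$ as the fattening of a covering graph $\widehat{Y}$ of $Y$, and show $\pi_1(\widehat{S}(Y))$ is cyclic by using the $f$-folded hypothesis to rule out branch vertices. But the middle step of your argument is, as you acknowledge, not a proof, and your proposed fix misses the key tool that makes the branch-vertex argument go through.

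The gap is this. Your stated observation---``vertices of $\widehat{Y}$ of valence $\geq 3$ never correspond to vertices of $Z_i$''---does not follow from $f$-foldedness as phrased. $\widehat{Y}$ is a component of the fibre product $Y\times_X \widehat{X}_j$ where $\widehat{X}_j\to X$ is a \emph{covering}, so a branch vertex of $\widehat{Y}$ projects to a branch vertex of $Y$ but to an arbitrary vertex of $\widehat{X}_j$, not necessarily one lying in the compact core $Z_j$ at all, and the $f$-folded condition constrains nothing here. What saves the argument is Stallings' theorem \cite[Theorem 5.5]{stallings_topology_1983} that the core of a fibre product is contained in the fibre product of the cores: this confines the core of $\widehat{Y}$ inside $Y\times_X Z_j$, where now both factor maps are \emph{immersions}, so a branch vertex of the core projects to a branch vertex of $Y$ \emph{and} a branch vertex of $Z_j$. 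Since the lift $\hat\delta$ of $\gamma_i$ lies in the core, any extra core structure attaches to $\hat\delta$ at a branch vertex, and that is precisely what $f$-foldedness forbids. Your Euler-characteristic sketch, without this lemma, gives no control over the core of the infinite graph $\widehat{Y}$. A further small misstatement: you gloss $f$-vertices as ``preimages in $\partial\widehat{S}(Y)$ of vertices of $Z_i$''; by definition they are preimages of the valence-$>2$ vertices of $Z$ only.
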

\begin{proof}
Let $\gamma_i:S^1\to Z$ be the geodesic representative of the conjugacy 
class of $g_i\in G$ and let $\hat{\gamma}_i:S^1\to \widehat{C}_G$ be the 
unique lift of $\gamma_i$.  By full combinatorial rigidity, the union 
of the lifts of the boundary components of $S(Y)$ to the fibre product
\[
S(Y)\times_{C_f} \widehat{C}_i\subseteq S(Y)\times \widehat{C}_i
\] 
is equal to the intersection 
$(S(Y)\times_{C_f} \widehat{C}_i)\cap(\partial S(Y)\times \coprod_i\hat{\gamma}_i(S^1))$.

Let $\widehat{S}(Y)$ be a component of some $S(Y)\times_{C_f} \widehat{C}_j$ that contains 
a lift $\hat{\delta}$ of some $\gamma_i$.  Then $\widehat{S}(Y)$ deformation retracts to a 
spine $\widehat{Y}$, which is a covering of $Y$.

The $f$-folded condition precisely means that, if $y$ is branch point of $Y$ 
and $z$ is a branch point of $Z$ that lies on some $\gamma_i$ then $\sigma(x)\neq f(y)$.

The claim now is that the core of $\widehat{Y}$ consists of precisely 
the lift $\hat{\delta}$.  Indeed, Stallings showed that the core of the fibre product 
is contained in the fibre product of the cores \cite[Theorem 5.5]{stallings_topology_1983}.  
Any branch vertex of the core of 
$\widehat{Y}$ contained in $\hat{\delta}$ maps on the one hand to a branch vertex 
of $Y$ and, on the other hand, to a branch vertex of $Z_G$ that lies on $\gamma_i$.  
This contradicts the $f$-folded hypothesis.

We have shown that every component of every $S(Y)\times_{C_f}\widehat{C}_j$ that 
contains a lift of a component of $\partial S(Y)$ has cyclic fundamental group.  
Therefore, $S(Y)$ is boundary-incompressible by Proposition \ref{prop:boundary_compressible_in_G}.
\end{proof}

\subsection{The Random Surface Subgroup Theorem}

We are now in a position to prove the main result in this section, the
Random Surface Subgroup Theorem. The most involved part of the argument is to show
that a pseudo-random homologically trivial chain bounds an $f$-folded surface; but actually,
this is already proved in \cite{Calegari_Walker_LP}, in the course of the proof of Thm.~8.9.

\begin{proposition}\label{prop:random_boundary_incompressible}
Let $\phi:F_k \to F_l$ be a random homomorphism of length $n$. Then with probability
$1-O(e^{-Cn})$ the image $\phi(F_k)$ is malnormal (so that every nontrivial element
is combinatorially rigid) and for every finite collection of nontrivial elements 
$g_i$ in $F_k$ whose image in $F_l$ is homologically trivial, there is an $f$-folded 
fatgraph $Y$ with $\partial S(Y)$ equal to the union of the $\phi(g_i)$.
\end{proposition}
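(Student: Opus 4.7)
The plan is to combine three ingredients already at our disposal: malnormality (hence combinatorial rigidity), pseudorandomness of the images of nontrivial elements, and the existing fatgraph construction from \cite{Calegari_Walker_LP}.

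First I would invoke Lemma~\ref{lem:random_is_malnormal} with $N=1$: with probability $1-O(e^{-Cn})$ the image $\phi(F_k)$ is malnormal. By Proposition~\ref{prop:malnormal_rigid}, every nontrivial conjugacy class in $\phi(F_k)$ is then fully combinatorially rigid in $\phi(F_k)$. This is precisely the property we need in order for the $f$-folded condition on a fatgraph with boundary $\sqcup_i \phi(g_i)$ to be well-defined, since it unambiguously identifies where the $f$-vertices sit on each boundary circle.

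Next, fix constants $T$ and $\epsilon$ suitable for the fatgraph construction of \cite[Thm.~8.9]{Calegari_Walker_LP}, and apply Lemma~\ref{lem:random_is_pseudorandom} to conclude that, with probability $1-O(e^{-Cn})$, the cyclically reduced representative of $\phi(g)$ is $(T,\epsilon)$-pseudorandom simultaneously for every nontrivial $g\in F_k$. Combined with malnormality, this places us on a single event $E$ of probability $1-O(e^{-Cn})$ whose definition depends only on $\phi$ and not on any particular choice of collection $\{g_i\}$.

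Finally, assume we are on the event $E$ and fix any finite collection of nontrivial $g_i\in F_k$ with $\sum_i\phi(g_i)=0$ in $H_1(F_l)$. The cyclic words $\{\phi(g_i)\}$ are then simultaneously cyclically reduced, $(T,\epsilon)$-pseudorandom, and homologically trivial, which are precisely the hypotheses of Thm.~8.9 of \cite{Calegari_Walker_LP}. Quoting that result produces a folded fatgraph $Y$ over $X$ with $\partial S(Y)$ equal to $\sqcup_i\phi(g_i)$ and satisfying the $f$-folded condition. Since the event $E$ is uniform in $\{g_i\}$, the full quantifier in the statement is handled. The proposition follows.

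The main obstacle is purely the invocation of \cite[Thm.~8.9]{Calegari_Walker_LP}: the substance of the argument (constructing a fatgraph with given pseudorandom boundary via a linear-programming/flow argument, and arranging the $f$-folded condition rather than mere foldedness) is carried out in that paper, and the work here amounts to verifying that malnormality plus pseudorandomness, each of which is exponentially likely, suffices to trigger their construction.
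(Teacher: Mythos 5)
Your overall skeleton matches the paper's: malnormality via Lemma~\ref{lem:random_is_malnormal}, pseudorandomness via Lemma~\ref{lem:random_is_pseudorandom}, both on a single high-probability event uniform in $\{g_i\}$, then invoke Calegari--Walker. The observation that the event $E$ does not depend on the particular collection $\{g_i\}$ is correct and is the right way to handle the quantifier.

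However, there is a genuine gap in the third step. You assert that ``quoting [Thm.~8.9] produces a folded fatgraph $Y$ \dots satisfying the $f$-folded condition,'' and in your closing paragraph you say that ``arranging the $f$-folded condition rather than mere foldedness\dots is carried out in that paper.'' Neither is accurate. The $f$-folded condition is defined in the present paper relative to the core graph $Z_G$ and the positions of its branch vertices on $\partial S(Y)$; Calegari--Walker's Theorem~8.9 does not mention $Z_G$ or $f$-vertices. What one actually gets from (a suitably stated version of) their theorem is: given a pseudorandom homologically trivial boundary $\Gamma$ and a subset $V$ of boundary vertices that are pairwise at distance at least $N\gg T$, there is a folded fatgraph with each vertex of $V$ mapped to a distinct $2$-valent vertex of $Y$. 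Turning this into the proposition requires three things you did not supply: (i) take $V$ to be the set of $f$-vertices on $\coprod_i\phi(g_i)$, which is only well-defined because combinatorial rigidity has been established; (ii) verify the sparsity hypothesis, namely that $f$-vertices are at pairwise distance $O(n)$, which follows from the randomness of $\phi$ via Lemma~\ref{lem:random_folding} and is not automatic; and (iii) note that the first step of the Calegari--Walker construction (their 8.3.2) must be replaced by a tagging step that glues a length-$2$ arc through each $v\in V$ to a disjoint copy of its inverse elsewhere in $\Gamma$, which is possible by pseudorandomness and the sparsity of $V$. Without (ii) and (iii), you cannot legitimately claim that the constructed fatgraph is $f$-folded; that is the real content of the proof, and your proposal treats it as already done in the cited reference.
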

\begin{proof}
The image is malnormal with the desired probability
by Lemma~\ref{lem:random_is_malnormal},
so it makes sense to talk about $f$-vertices on $\partial S(Y)$. By
Lemma~\ref{lem:random_is_pseudorandom}, for any fixed $T,\epsilon$, and for any finite
collection $g_i$, we can assume each individual $\phi(g_i)$ is $(T,\epsilon)$-pseudorandom,
also with the desired probability. 

Now, Calegari--Walker \cite[Theorem~8.9 (the Random $f$-folded Surface Theorem)]{Calegari_Walker_LP}, 
prove that for $(T,\epsilon)$ sufficiently big (depending on the rank $l$),
for any $(T,\epsilon)$-pseudorandom homologically trivial
immersed 1-manifold $\Gamma$ in $X$, and any subset $V$ of vertices in $\Gamma$
so that no two vertices in $V$ are closer than $N$ for some fixed $N\gg T$ (also
depending on $l$) there is a folded fatgraph $Y$ with $\partial S(Y) = \Gamma$
and such that every vertex in $V$ maps to a $2$-valent vertex of $Y$, with distinct
vertices of $V$ mapping to distinct vertices of $Y$. The first step of the construction
(in place of 8.3.2 in \cite{Calegari_Walker_LP})
is to take for each vertex $v \in V$, an arc $\sigma$ of length $2$ in $\Gamma$
containing $v$ as the midpoint, and glue it to a disjoint copy of $\sigma^{-1}$ in
$\Gamma$ not containing any point in $V$. By $(T,\epsilon)$-pseudorandomness, 
and the sparsity of $V$ in $\Gamma$, this pairing can be done, 
producing what is called in \cite{Calegari_Walker_LP} a
{\em chain with tags}. Then the remainder of the proof of \cite{Calegari_Walker_LP}
applies verbatim (actually, the proof is even easier than in \cite{Calegari_Walker_LP} since
the last step 8.3.7 is unnecessary).

In our context, taking $V$ to be the $f$-vertices on the $\phi(g_i)$, 
we have $N = O(n)$, and $S(Y)$ will be $f$-folded, as claimed.
\end{proof}

\begin{theorem}[Random Surface Subgroup]\label{thm:random_amalgam_surface}
Let $H:=F_1 *_G F_2$ or $H:=F *_G$ be obtained by amalgamating two free groups over random
subgroups of rank $k\ge 1$ of length $n$, or by taking an HNN extension over two random
subgroups of rank $k\ge 1$ of length $n$. Then $H$ contains a closed surface subgroup
with probability $1-O(e^{-Cn})$.
\end{theorem}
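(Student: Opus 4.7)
The plan is to construct the surface subgroup by gluing $f$-folded fatgraphs along their boundaries, and to verify injectivity using the compressing-bigon criterion from Lemmas~\ref{lemma:compressing_bigon_exists} and \ref{lemma:bigon_gives_loop_in_G}. First I would fix a finite collection of nontrivial elements in the abstract free group $F_k$ whose image under each random homomorphism is homologically trivial in the target: for $k\ge 2$ the single commutator $g=[a,b]$ suffices, while for $k=1$ one can take the balanced collection $\{a,\ldots,a,a^{-1},\ldots,a^{-1}\}$ with equal numbers of each letter. By Lemma~\ref{lem:random_is_malnormal}, with probability $1-O(e^{-Cn})$ the relevant images of $F_k$ form a malnormal family in the ambient free group, so Proposition~\ref{prop:random_boundary_incompressible} applies with the same probability.

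Next, in the amalgam case $H=F_1 *_G F_2$, I would invoke Proposition~\ref{prop:random_boundary_incompressible} separately on each side to obtain $f$-folded fatgraphs $Y_1,Y_2$ with $\partial S(Y_j)$ equal to the cyclically reduced representative of $\phi_j(g)$. In the HNN case $H=F*_G$, the two random inclusions $\phi_1,\phi_2$ have the same target, and an analogous application produces a single $f$-folded fatgraph $Y$ with boundary components $\phi_1(g)$ and $\phi_2(g)$ (with matching multiplicities in the $k=1$ case). Each boundary loop immerses into its copy $Z_{G,j}$ of the core graph of $G$, and the construction of $W_\phi$ glues $Z_{G,1}$ to $Z_{G,2}$ via a homotopy equivalence representing $\phi_2\circ\phi_1^{-1}$. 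Since both boundary loops represent the same element of the abstract core $Z_G$, they match up under this identification, and I can form a closed oriented surface $S$: in the amalgam case, $S=S(Y_1)\cup_\partial S(Y_2)$; in the HNN case, $S$ is obtained by identifying the two boundary components of $S(Y)$ along $Z$. The resulting map $\sigma:S\to W_\phi$ is transverse to $Z$, and the decomposition along $\sigma^{-1}(Z)$ recovers the fatgraphs just built. For $n$ large, $\chi(S)<0$ because the Calegari--Walker construction yields fatgraphs whose Euler characteristic grows like $-\Theta(n)$.

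To verify $\pi_1$-injectivity I would argue by contradiction. If $\sigma$ were not $\pi_1$-injective, then Lemma~\ref{lemma:compressing_bigon_exists} would produce a compressing bigon in some piece of $S$, and Lemma~\ref{lemma:bigon_gives_loop_in_G} would convert it into an essential, non-boundary-parallel loop in that piece whose image is freely homotopic into $Z$. But the pieces are $f$-folded fatgraphs, which are boundary incompressible by Proposition~\ref{prop:f_folded_boundary_incompressible}, giving the desired contradiction.

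The main obstacle is really already absorbed by Proposition~\ref{prop:random_boundary_incompressible}, whose engine is the random $f$-folded surface construction of \cite{Calegari_Walker_LP}. The remaining technical care is combinatorial bookkeeping: verifying that the boundary loops on the two sides of the amalgam (or the two ends of the single HNN fatgraph) really do match up under the identification $Z_{G,1}\sim Z_{G,2}$, so that the piecewise boundary-incompressibility supplied by Proposition~\ref{prop:f_folded_boundary_incompressible} suffices to rule out compressing bigons for $\sigma$ as a whole.
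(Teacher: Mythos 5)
Your proposal takes essentially the same route as the paper's proof: pick a homologically trivial collection of conjugacy classes in $G$, invoke Proposition~\ref{prop:random_boundary_incompressible} to bound them by $f$-folded fatgraphs in each factor, certify boundary-incompressibility via Proposition~\ref{prop:f_folded_boundary_incompressible}, and deduce injectivity of the glued closed surface from the compressing-bigon criterion of Lemmas~\ref{lemma:compressing_bigon_exists}--\ref{lemma:bigon_gives_loop_in_G}. The structure, the key lemmas invoked, and the probabilistic input are identical.

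Two small comments. First, the paper's choice of boundary data $\{g,g^{-1}\}$ for an arbitrary nontrivial $g\in G$ is slightly cleaner than your case split on $k$: it is automatically homologically trivial for every $k\ge1$ (including $k=1$), so no separate argument for rank one is needed. Second, in the HNN case, where you build a single fatgraph $Y$ with boundary components $\phi_1(g)$ and $\phi_2(g)$: as stated, gluing those two boundary circles via the identification $Z_{G,1}\sim Z_{G,2}$ (which carries $\phi_1(g)$ to $\phi_2(g)$, orientedly) is orientation-\emph{preserving} on the circles, which does not yield an oriented closed surface. You want the boundary to be $\phi_1(g)\cup\phi_2(g^{-1})$ (still homologically trivial when $g$ is a commutator), or, as the paper does implicitly by taking $\{g,g^{-1}\}$ as the collection, to have both orientations available on each side so the required orientation-reversing pairing exists. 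This is a cosmetic fix; the underlying construction is the same, and your remark that the whole argument reduces to Proposition~\ref{prop:random_boundary_incompressible} plus bookkeeping is exactly right.
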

\begin{proof}
Let $g_i$ be any finite collection of nontrivial elements in $G$ which is homologically
trivial. For example, we could pick any nontrivial element $g\in G$ and take for our collection
the union of $g$ and $g^{-1}$.

By Proposition~\ref{prop:random_boundary_incompressible} for each of the inclusion maps
$\phi_j$ of the edge group, the image $\phi_j(g_i)$ bounds an $f$-folded surface. 
By Proposition~\ref{prop:f_folded_boundary_incompressible} these surfaces are
injective and boundary incompressible in their respective factors, so the closed surface
obtained by gluing them along their boundary is injective in $H$.
\end{proof}

\begin{remark}
We may weaken the hypotheses of Theorem~\ref{thm:random_amalgam_surface} in some
circumstances. If $G \to F$ is an inclusion of free groups, and $\ker:H_1(G) \to H_1(F)$
is nontrivial, \cite[Proposition~6.3 ]{Calegari_Walker_LP} says that we can find a
surface (group) in $F$ with boundary representing some nontrivial class in the kernel
which is an absolute minimizer for the scl norm (i.e.\/ relative 2-dimensional
Gromov norm). Such a minimizer is necessarily incompressible and boundary incompressible.
So if we build $H = F_1 *_G F_2$ (for example)
where $G \to F_1$ has $\ker:H_1(G) \to H_1(F_1)$ nontrivial, 
and $G \to F_2$ is random, then $H$ contains a closed surface subgroup, with probability
$1-O(e^{-Cn})$.
\end{remark}

\begin{remark}\label{remark:many_subgroups}
Notice that our argument gives rise to {\em many} surface subgroups; at least one for every
homologically trivial collection of conjugacy classes in $G$ (and actually many more than that, since
there are many choices in the construction of an $f$-folded surface). Hence the number of
surface subgroups of genus $g$ in a random graph of free groups should grow at least like
$g^{Cg}$. The homologically trivial collection of conjugacy classes may be recovered from
the surface by seeing how it splits in the graph-of-groups structure, so these surfaces are
really distinct.
\end{remark}

\section{Acknowledgments}

We would like to thank Fr\'ed\'eric Haglund and Alden Walker for useful conversations.
Danny Calegari was supported by NSF grant DMS 1005246.  Henry Wilton was supported by an EPSRC Career Acceleration Fellowship.

\appendix

\makeatletter
\@addtoreset{theorem}{section}
\makeatother

\renewcommand\thetheorem{\thesection.\arabic{theorem}}
\setcounter{theorem}{0}

\section{Graphs of free groups in virtually special groups}

In this appendix, we explain why many families of word-hyperbolic groups are known to contain 
one-ended fundamental groups of graphs of free groups with quasiconvex edge groups.

Recall that a group is \emph{special} if it is the fundamental group of a compact, non-positively 
curved, special cube complex in the sense of Haglund and Wise \cite{haglund_special_2008}.  
The reader is referred to that paper for the definition; we will only need the fact that the 
codimension-one hyperplanes of a special cube complex are embedded.  Agol proved that any 
word-hyperbolic group which is also the fundamental group of a non-positively curved cube complex 
is virtually special.  Hence, we have the following families of examples (among others).

\begin{example}[Random groups]
By a theorem of Dahmani--Guirardel--Przytycki \cite{dahmani_random_2011}, a random group (in the 
density model) is never special.  However, at densities less than $1/6$, a random group is 
cubulated \cite{Ollivier_Wise} and hence virtually special.
\end{example}

\begin{example}[Small-cancellation groups]
All $C'(1/6)$ groups are cubulated \cite{Wise_cancel} and hence virtually special.
\end{example}

\begin{definition}\label{definition:hyperplane}
Let $X$ be a (nonpositively curved) special cube complex and let $\Gamma=\pi_1(X)$.  
A \emph{codimension-1 hyperplane 
subgroup} of $\Gamma$ is the image of the fundamental group of a hyperplane under the map 
induced by inclusion.  Because hyperplanes are convex, the induced map is injective.  More 
generally, a  \emph{codimension-$(k+1)$ hyperplane subgroup} is the intersection of two 
codimension-$k$ hyperplane subgroups, where a base point is fixed on the intersection of 
the hyperplanes being considered.  Note that, if $X$ is compact, then there are only finitely 
many non-trivial codimension-$k$ subgroups as $k$ varies. 
\end{definition}

\begin{remark}
Because hyperplanes are embedded, each codimension-$k$ hyperplane subgroup is a graph of 
groups over the codimension-$(k+1)$ hyperplane subgroups it contains, with quasiconvex edge groups.
\end{remark}

The following is well known to the experts, but as far as we are aware does not appear in the 
literature.  We include it here for completeness.

\begin{theorem}\label{thm: GoGs inside special groups}
Let $X$ be a compact, non-positively curved cube complex in which every codimension-one hyperplane 
is embedded, and suppose that $\pi_1(X)$ is word-hyperbolic and one-ended.  Then $\pi_1(X)$ has a 
subgroup $H$ which is one-ended, word-hyperbolic and the fundamental group of a graph of 
free groups in which the edge groups are quasiconvex.
\end{theorem}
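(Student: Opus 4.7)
The plan is to descend through the hierarchy of codimension-$k$ hyperplane subgroups and, if necessary, pass to a one-ended Grushko factor at the end. First, iterating the preceding remark and noting that codimension-$d$ hyperplane subgroups are trivial (they are fundamental groups of $0$-dimensional complexes), every codimension-$(d-1)$ hyperplane subgroup is a graph of trivial groups and hence free. More generally, by the remark, every codimension-$k$ hyperplane subgroup decomposes as a finite graph of groups over its codimension-$(k+1)$ hyperplane subgroups, with quasiconvex edge groups. In parallel, since hyperplanes are convex subcomplexes, every hyperplane subgroup is quasiconvex in $\pi_1(X)$, and so is word-hyperbolic and (being a subgroup of the torsion-free $\pi_1(X)$) torsion-free.

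Let $k^{*}$ be the smallest integer in $\{1,\ldots,d-1\}$ such that every codimension-$k^{*}$ hyperplane subgroup is free. This is well-defined because codimension-$(d-1)$ hyperplane subgroups are free, and $k^{*}\geq 1$ because $\pi_1(X)$ is one-ended and therefore not free. By minimality of $k^{*}$, some codimension-$(k^{*}-1)$ hyperplane subgroup $H_0$ is not free. Applying the remark one more time, $H_0$ is the fundamental group of a finite graph of free groups with quasiconvex edge groups, and by the previous paragraph it is word-hyperbolic. Thus $H_0$ already has almost all the desired properties, the only remaining issue being one-endedness.

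If $H_0$ is one-ended, take $H := H_0$. Otherwise, since $H_0$ is torsion-free, word-hyperbolic, and not free, Grushko's theorem gives a free-product decomposition $H_0 = A_1 \ast \cdots \ast A_m \ast F_r$ with each $A_i$ one-ended and $m\geq 1$. Set $H := A_1$. Then $H$ is one-ended, and as a free factor of a word-hyperbolic group it is quasiconvex in $H_0$ and hence in $\pi_1(X)$, so word-hyperbolic. By the Bass--Serre subgroup theorem, $H$ inherits a graph-of-groups decomposition from the graph-of-free-groups structure of $H_0$ whose vertex and edge groups are intersections of $H$ with conjugates of the free subgroups of $H_0$, and hence free; by Delzant's accessibility theorem for torsion-free word-hyperbolic groups, this induced decomposition has finite underlying graph; and its edge groups remain quasiconvex in $H$, because intersections of quasiconvex subgroups in a hyperbolic group are quasiconvex. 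The main obstacle is precisely this last step: verifying that the free factor $H$ inherits a genuinely \emph{finite} graph-of-free-groups decomposition with quasiconvex edge groups, which rests on accessibility (to cut the underlying graph down to a finite one) and on the stability of quasiconvexity under intersection with quasiconvex subgroups.
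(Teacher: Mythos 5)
Your first half (descending through the hyperplane hierarchy to the last non-free level) is essentially the paper's argument, but your second half diverges. The paper handles the passage to a one-ended piece via Lemma~\ref{lem: Compatible free splitting} (a Shenitzer/Diao--Feighn style ``compatible free splitting'' argument, run in $S\times T$) and Lemma~\ref{lem: One-ended factor}: it repeatedly rearranges the graph-of-groups so that each free splitting happens inside a vertex group, peeling off a free factor $F_r$ while manifestly retaining a graph-of-free-groups structure with quasiconvex edge groups on the complement $H$. You instead take a one-ended Grushko factor $A_1$ of $H_0$ head-on and induce a splitting of $A_1$ from the Bass--Serre tree of $H_0$'s decomposition. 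That route does work, but not quite for the reason you give: finiteness of the underlying graph of the induced decomposition is not an accessibility statement. It follows from the elementary fact that a finitely generated group (here $A_1$, finitely generated because Grushko factors of finitely generated groups are finitely generated, and not elliptic in $T$ because $A_1$ is one-ended and the vertex stabilizers are free) acts cocompactly on its minimal invariant subtree $T_{A_1}\subseteq T$; Delzant's accessibility is unnecessary and also not well-suited here, since it bounds complexity of reduced splittings rather than asserting cocompactness of a given subgroup action. With cocompactness in hand, the remaining checks go as you indicate: vertex stabilizers $A_1\cap gVg^{-1}$ are free, and finitely generated by the standard fact that in a cocompact splitting of a finitely generated group with finitely generated edge groups the vertex groups are finitely generated; and edge groups $A_1\cap gEg^{-1}$ are quasiconvex in $A_1$ because $A_1$ and $E$ are both quasiconvex in the hyperbolic $H_0$ and quasiconvexity passes to intersections and to quasiconvex subgroups of quasiconvex subgroups. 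The tradeoff is that the paper's lemmas give a concrete, reusable structural statement (the free splitting of a graph of free groups is always compatible with it), whereas your Bass--Serre argument is shorter to state but leans on quasiconvexity facts and the cocompactness observation that you should make explicit in place of the accessibility citation.
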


In particular, the conclusion of the theorem holds for virtually special groups.

To prove the theorem we make use of the following lemma, which is fundamental to the work of 
Diao and Feighn \cite[p. 1837]{diao_grushko_2005} and goes back to a theorem of 
Shenitzer \cite{shenitzer_decomposition_1955}.

\begin{lemma}\label{lem: Compatible free splitting}
Let $G$ be finitely generated and the fundamental group of a graph of groups $\mathcal{G}$ 
with non-trivial, finitely generated edge groups and suppose that $G$ splits freely.  
Then $G$ is the fundamental group of a graph of groups $\mathcal{G}'$ in which every edge group 
of $\mathcal{G}'$ is a finitely generated subgroup of an edge group of $\mathcal{G}$ and some 
vertex group of $\mathcal{G}'$ splits freely relative to the incident edge groups.
\end{lemma}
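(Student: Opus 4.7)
The plan is Bass--Serre theoretic.  Let $T$ denote the Bass--Serre tree of $\mathcal{G}$ and let $T'$ denote the Bass--Serre tree of the given free splitting of $G$; both carry minimal $G$-actions, and crucially $T'$ has trivial edge stabilizers while, by hypothesis, every edge group of $\mathcal{G}$ is nontrivial.  This tension between the two trees is what will drive the argument.

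First I would show that some vertex group $G_v$ of $\mathcal{G}$ is not elliptic in $T'$.  If every $G_v$ fixed a vertex of $T'$, then every edge group $G_e$ would be elliptic too, being a subgroup of its endpoint vertex groups; equivariantly choosing a fixed vertex for each vertex group of $\mathcal{G}$ and joining adjacent choices by the geodesic between them would give a $G$-equivariant simplicial map $\pi\colon T\to T'$.  Any edge of $T$ mapping onto an edge of $T'$ would push a nontrivial edge group of $\mathcal{G}$ into the trivial edge stabilizer of $T'$, which is impossible; so $\pi(T)$ would contain no edge, forcing it to be a single $G$-invariant vertex and contradicting the minimality of the nontrivial action on $T'$.

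The central step is then to refine $\mathcal{G}$ at $v$ so that every edge adjacent to $v$ carries an edge group that is elliptic in $T'$.  For each edge $e$ at $v$ whose stabilizer $G_e$ is not elliptic in $T'$, the restricted action of $G_e$ on its minimal invariant subtree $T'_e\subseteq T'$ has trivial edge stabilizers, so the quotient graph of groups $\mathcal{H}_e:=T'_e/G_e$ is a finite graph of groups for $G_e$ with trivial edge groups; its vertex groups are intersections of $G_e$ with vertex stabilizers of $T'$, and are finitely generated (by Grushko applied to the resulting free product decomposition of the finitely generated group $G_e$).  Performing the standard Bass--Serre blow-up, I would equivariantly replace each lift $\tilde e\subseteq T$ by a copy of $T'_{\tilde e}$, with the appropriate attaching data at its endpoints, producing a refined $G$-tree whose quotient $\mathcal{G}'$ has edge groups that are finitely generated subgroups of edge groups of $\mathcal{G}$ (some possibly trivial, arising within the blown-up region), preserves the vertex $v$ and its group $G_v$, and has the property that every edge of $\mathcal{G}'$ incident to $v$ has stabilizer elliptic in $T'$.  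Setting up this blow-up carefully --- in particular, specifying the attaching data at the two endpoints of $\tilde e$ so that the resulting quotient indeed has fundamental group $G$ --- is the main technical obstacle.

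To conclude, the action of $G_v$ on its minimal invariant subtree $T'_v\subseteq T'$ is nontrivial (since $G_v$ is not elliptic) and has trivial edge stabilizers, so the quotient $T'_v/G_v$ is a nontrivial free product decomposition of $G_v$.  For each edge $f$ of $\mathcal{G}'$ incident to $v$, the edge group $G_f$ is a finitely generated subgroup of $G_v$ that is elliptic in $T'$; hence every element of $G_f$ acts elliptically on the invariant subtree $T'_v$, and by Serre's lemma (a finitely generated group acting on a tree with every element elliptic fixes a vertex) $G_f$ itself is elliptic in $T'_v$, and thus conjugate into a vertex group of the free decomposition of $G_v$.  This exhibits $G_v$ as splitting freely in $\mathcal{G}'$ relative to its incident edge groups, which is the desired conclusion.
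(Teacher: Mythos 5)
Your Step~1 (some vertex group $G_v$ of $\mathcal{G}$ is non-elliptic in $T'$, else an equivariant map $T\to T'$ would collapse every edge and contradict minimality of the nontrivial free splitting) is correct, and it cleanly fills in a point the paper states rather tersely.  Step~3 is also fine once Step~2 is granted: if every edge group of $\mathcal{G}'$ incident to $v$ is elliptic in $T'$, then (projecting fixed points to $T'_v$ or, as you do, applying Serre's lemma to the finitely generated group $G_f$) each $G_f$ is elliptic in $T'_v$, so $T'_v/G_v$ exhibits $G_v$ as splitting freely relative to its incident edge groups.

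The gap is in Step~2, and it is not merely a technical detail to be filled in later. The operation you invoke --- replacing an edge $\tilde{e}$ of $T$ by a copy of the minimal $G_{\tilde{e}}$-subtree $T'_{\tilde{e}}$ --- is not a well-defined construction of a $G$-tree.  The standard Bass--Serre blow-up refines a \emph{vertex} $\tilde{v}$ by a $G_{\tilde{v}}$-tree, and it requires each incident edge group to be elliptic in that tree, so that each incident edge has an equivariantly chosen vertex to attach to.  If you simulate an ``edge blow-up'' by subdividing $\tilde{e}$ at a midpoint $m$ (with $\mathrm{Stab}(m)=G_{\tilde{e}}$) and then blowing up $m$ by $T'_{\tilde{e}}$, the two half-edges of $\tilde{e}$ both carry edge group $G_{\tilde{e}}$, and $G_{\tilde{e}}$ is \emph{by construction} not elliptic in $T'_{\tilde{e}}$, so the required attaching points do not exist; inserting the whole tree $T'_{\tilde{e}}$ produces a 2-dimensional object, not a tree.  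So there is no $G$-tree refinement of $T$ of the form you describe, and the ``appropriate attaching data'' you defer cannot be supplied.  This is precisely why the paper leaves the category of trees: it forms the compact core $K$ of the square complex $(S\times T)/G$ (in the spirit of Guirardel's core and Diao--Feighn), observes that the horizontal mid-graphs corresponding to edge stabilizers of the free splitting $S$ are forests (because those stabilizers are trivial), collapses one of these trees to a point, and then cuts the resulting complex vertically to produce $\mathcal{G}'$, with the vertical piece containing the collapse point serving as the vertex group that splits freely relative to its incident edge groups.  To repair your argument you would need to replace Step~2 by this core/cutting construction (or an equivalent common-refinement argument), rather than an edge-wise blow-up.
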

\begin{proof}
Let $T$ be the Bass--Serre tree for the graph of groups $\mathcal{G}$ and let $S$ be the 
Bass--Serre tree for some non-trivial free splitting of $G$.  Because $G$ is finitely generated, 
we may assume that the actions of $G$ on $S$ and $T$ are both cocompact.  Consider the 
diagonal action of $G$ on $S\times T$; the quotient $Q=(S\times T)/G$ naturally has the 
structure of a complex of groups with fundamental group $G$.  

Consider the action of $\mathcal{G}_v$, a vertex group of $\mathcal{G}$, on $S$.  
If $\mathcal{G}_v$ fixes a point then the conclusion of the lemma already holds.  
Therefore, we may assume that $\mathcal{G}_v$ does not fix a point.  
Because $\mathcal{G}_v$ is finitely generated (which follows from the fact that $G$ and the 
edge groups are finitely generated), there is a unique minimal, $\mathcal{G}_v$-invariant 
subtree $S_v$, on which $\mathcal{G}_v$ acts cocompactly.

Similarly, every edge group $\mathcal{G}_e$ of $\mathcal{G}$ acts cocompactly on a unique 
minimal invariant subtree $S_e\subseteq S$: either $\mathcal{G}_e$ fixes a vertex of $S$, 
which must be unique because $\mathcal{G}_e$ is non-trivial but edge stabilizers of $S$ are trivial, 
or otherwise $S_e$ exists because $\mathcal{G}_e$ is finitely generated.

Now, $Q$ has a compact core $K$, which can be described as follows.  Consider the subcomplex 
$\widetilde{K}\subseteq S\times T$ consisting of all pairs $(s,t)$ such that $s\in S_x$ where 
$x$ is the vertex or edge of $T$ that contains $t$.  Then $\widetilde{K}$ is a contractible, 
$G$-invariant subcomplex of $S\times T$ and the quotient $K=\widetilde{K}/G$ is the 
required compact core.

The complex $K$ is a square complex; call the 1-cells that are images of edges of $S$ horizontal 
and the images of edges $T$ vertical.   The edge stabilizers of the action of $G$ on $S$ are 
realized by horizontal subgraphs contained in the middle of the squares of $K$.  Because the 
edge stabilizers of $S$ are trivial these subgraphs are trees, and a square of $K$ containing 
a leaf can be collapsed onto three of its sides.  Proceeding inductively, we may collapse 
such a tree to a single point $y$ in a subcomplex $K'$.

Cutting vertically down the middles of the squares of $K'$ gives a new graph of groups 
$\mathcal{G}'$ for $G$ in which the edge groups are subgroups of the edge groups of $\mathcal{G}$.  
The vertical component of the 1-skeleton of $K'$ that contains $y$ is a vertex group of $\mathcal{G}'$ 
that splits freely relative to the incident edge groups, as required.
\end{proof}

Combining Lemma \ref{lem: Compatible free splitting} with Grushko's Theorem, we quickly obtain 
the following.

\begin{lemma}\label{lem: One-ended factor}
Suppose $G$ is word-hyperbolic and the fundamental group of a graph of free groups in which 
every edge group is quasiconvex. Then 
\[
G\cong F_r*G'
\]
where $F_r$ is a free group and $G'$ does not split freely and is the fundamental group of 
a graph of free groups in which every edge group is quasiconvex.
\end{lemma}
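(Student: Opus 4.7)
The plan is to combine Grushko's theorem with Lemma \ref{lem: Compatible free splitting}: Grushko gives the abstract free decomposition, while Lemma \ref{lem: Compatible free splitting} is used to transfer the graph-of-free-groups structure onto the freely indecomposable piece.

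Since $G$ is finitely generated, Grushko's theorem supplies a free decomposition $G\cong F_r*G'$ with $F_r$ a free group of maximal rank and $G'$ freely indecomposable (possibly trivial). If $G$ itself does not split freely we take $F_r$ trivial and $G'=G$ and are done; so the content of the lemma is to show that the freely indecomposable piece $G'$ still admits a graph-of-free-groups structure with quasiconvex edges.

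To establish this, I would apply Lemma \ref{lem: Compatible free splitting} iteratively to the given graph of groups $\mathcal{G}$ for $G$. Whenever the current group splits freely, the lemma produces a new graph of groups with fundamental group $G$ whose edge groups are finitely generated subgroups of the old ones (hence still quasiconvex in $G$, since finitely generated subgroups of quasiconvex subgroups of a word-hyperbolic group are quasiconvex), and in which some vertex group $V$ splits freely, say $V=A*B$, relative to its incident edge groups. I would then blow up this vertex by replacing $V$ with two vertices labelled $A$ and $B$ joined by a new edge carrying the trivial edge group, redistributing the incident edges so that each has image in $A$ or in $B$. Collapsing the new trivial edge exhibits a non-trivial free splitting of $G$, and the resulting pieces are again graphs of free groups with quasiconvex edges, so the argument can be iterated on each piece.

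Iteration terminates because each step strictly decreases the Grushko rank of the piece under consideration, and this rank is finite. What remains when the procedure halts is a graph-of-free-groups structure with quasiconvex edges on the freely indecomposable factor $G'$. The main obstacle I anticipate is the bookkeeping of the refinement --- tracking that quasiconvexity of edge groups really does persist at each stage of the iteration, and that the final piece genuinely does not split freely any further (else Lemma \ref{lem: Compatible free splitting} would apply once more) --- but both should follow essentially routinely from the conclusions of Lemma \ref{lem: Compatible free splitting} combined with the hyperbolicity of $G$.
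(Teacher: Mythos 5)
Your approach matches the paper's: combine Grushko's theorem with Lemma~\ref{lem: Compatible free splitting} by induction, at each stage blowing up the vertex that splits freely relative to its incident edge groups to exhibit a free splitting of the ambient group, and iterating until the non-free piece no longer splits. The blow-up-and-collapse mechanism you describe is the right implementation of what the paper summarizes as ``By Lemma~\ref{lem: Compatible free splitting} and induction (invoking Grushko's theorem).''

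There is, however, one incorrect justification. You assert that quasiconvexity of the new edge groups follows because ``finitely generated subgroups of quasiconvex subgroups of a word-hyperbolic group are quasiconvex.'' That is false in general: quasiconvex subgroups of hyperbolic groups need not be locally quasiconvex (indeed $G$ is quasiconvex in itself, but a hyperbolic group may well have finitely generated, non-quasiconvex subgroups). What makes the step work here is the specific structure: the new edge groups produced by Lemma~\ref{lem: Compatible free splitting} are finitely generated subgroups of the \emph{old} edge groups, which are free since they embed in the free vertex groups. Free groups are \emph{locally} quasiconvex, so the new edge group is quasiconvex in the old one, which is quasiconvex in $G$ by hypothesis, and quasiconvexity is transitive; finally, a quasiconvex subgroup of $G$ contained in the free factor $G'$ (itself quasiconvex, being a free factor) is quasiconvex in $G'$. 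This is exactly the reasoning the paper gives with ``because free groups are locally quasiconvex,'' and your argument should be repaired to use it rather than the false general principle.

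A minor point, which your write-up shares with the lemma's statement: Grushko's theorem yields $G\cong F_r*G_1*\cdots*G_m$ with each $G_i$ freely indecomposable and non-free; it does not by itself give $m\le 1$. The intended reading (and the only thing needed in the proof of Theorem~\ref{thm: GoGs inside special groups}) is that each freely indecomposable factor inherits the required graph-of-free-groups structure, and one such factor is taken as $G'$.
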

\begin{proof}
By Lemma \ref{lem: Compatible free splitting} and induction (invoking Grushko's theorem), we have that 
\[
G\cong F_r*G'
\]
where $G'$ is the fundamental group of a graph of free groups in which every edge group is 
a finitely generated subgroup of an edge group of $G$. Necessarily, $G'$ is word-hyperbolic 
and the edge groups of $G'$ are quasiconvex because free groups are locally quasiconvex.  
\end{proof}

\begin{proof}[Proof of Theorem \ref{thm: GoGs inside special groups}]
Consider a descending chain of subgroups
\[
\Gamma=H_0\supseteq H_1\supseteq H_2\supseteq\ldots \supseteq H_k\supseteq\ldots
\]
where $H_k$ is a codimension-$k$ hyperplane subgroup.  Let $k$ be maximal such that $H_k$ is 
non-free.  Then $H_k$ is word-hyperbolic, non-free and the fundamental group of a graph of 
free groups with quasiconvex edge groups.  By Lemma \ref{lem: One-ended factor}, we can write
\[
H_k=F_r*H
\]
where $H$ is as required; note that $H$ is one-ended because $H_k$ is torsion-free and non-free.
\end{proof}

\end{document}